\documentclass[11pt, a4paper]{amsart}

\usepackage{lineno,hyperref}
\usepackage{geometry}
%

\usepackage{amssymb,amsmath}
\usepackage{amsfonts}
\usepackage{subcaption}
\usepackage[english]{babel}
\usepackage{amsmath,amssymb,eucal}
\usepackage{hyperref}
\usepackage{pgfplots}
\usepackage{caption}
\usepackage{tikz}
\usetikzlibrary{calc,backgrounds,positioning,decorations.pathmorphing,shapes.misc,matrix,through,shapes,patterns}%
\usepackage{float}

\newtheorem{definition}{Definition}

\newtheorem{theorem}{Theorem}

\newtheorem{proposition}{Proposition}
\newtheorem{lemma}{Lemma}%
\newtheorem{example}{Example}%

\newcommand{\dsum}{\displaystyle\sum}

\def\supp{\mathrm{supp}}
\def\Fc{\mathcal{F}}
\def\R{\mathbb{R}}
\def\N{\mathbb{N}}
\def\Kc{\mathcal{K}}

\let\origmaketitle\maketitle
\def\maketitle{
  \begingroup
  \def\uppercasenonmath##1{} 
  \let\MakeUppercase\relax 
  \origmaketitle
  \endgroup
}
\begin{document}

		\title[A fully fuzzy framework for minimax MILP]{\large On a fully fuzzy framework for minimax mixed integer linear programming}

		\author[Manuel Arana-Jim\'enez \MakeLowercase{and} V\'ictor Blanco]{{\large Manuel Arana-Jim\'enez$^\dagger$ and V\'ictor Blanco$^\ddagger$}\medskip\\
$^\dagger$Dpt.\ Statistics \& Operational Research, Universidad de C\'adiz\\
$^\ddagger$Dpt. Quant. Methods for Economics \& Business, Universidad de Granada
}

\address{M. Arana-Jim\'enez: Dpt.\ Statistics \& Operational Research, Universidad de C\'adiz}
\email{manuel.arana@uca.es}
\address{V. Blanco: Dpt. Quant. Methods for Economics \& Business, Universidad de Granada}
\email{vblanco@ugr.es}


		\begin{abstract}
			In this work, we present a modeling framework for minimax mixed 0-1 fuzzy linear problems. It is based on extending the usual rewriting of crisp minimax problems via auxiliary variables to model the maximum of a finite set of fuzzy linear functions. We establish that the considered problem can be equivalently formulated as a multiple objective mixed integer programming problem. The framework is applied to a fully fuzzy version of the capacitated center facility location problem.	\end{abstract}
		
		\keywords{Minimax optimization, Fully fuzzy linear programming problem, fuzzy numbers, multiobjective optimization, Center Facility Location.}
		\subjclass[2010]{90C47,90C70,	90B50, 90C11, 90B80}
\maketitle 

\section{Introduction}

Minimax optimization is a widely studied modeling strategy in Decision Theory that has been applied in many different fields. Minimax decision strategies are desirable when the decision maker is risk-averse or when the average cost is less important than ensuring smaller maximum costs. One of the most popular results involving minimax decisions is the one due to Von Neumann \cite{vonneuman} in which is stated that one can find Nash Equilibria in Zero-Sum games using minimax optimization. That result is considered as the cornerstone of the developments performed in Non Cooperative Game Theory in the last decades. Several applications of minimax optimization can be found in the literature: facility location \cite{garfinkel,regret}, transportation \cite{ahuja86}, flow \cite{CCTT09}, scheduling \cite{graham}, resource allocation \cite{Kaplan74}, portfolio selection \cite{young} only to mention a few. In general, minimax optimization problems, although its objective function is not linear, but piecewise linear, it can be rewritten as a problem with the similar shape of the original one but when instead of minimizing the maximum operator of a finite set of functions, one only need to minimize a linear objective function after adding to the problem some extra constraints and a new auxiliary variable. Hence, the complexity of the problem is inherited to minmax problems. In particular, minimax mixed integer linear programming is NP-hard as its minisum version, but polynomially solvable when the number of integer variables is fixed \cite{lenstra}.

In this paper, we analyze minimax mixed integer linear programming problems in which some kind of imprecision is assumed to the parameters of the models and also to the decision variables. Such an imprecision is modeled by considering that the parameters and variables are fuzzy numbers. Decision making under a fuzzy environment was introduced by Bellman and Zadeh \cite{bellman} in the seventies. Such an approach is well-known and has been adopted by researchers in fuzzy optimization problems (see e.g., \cite{campos89,fmoknp,ftrans,kasperski07,fspp,ganesan06,maleki00,maleki02,ebra10}). However, it is usual that not all the elements of a  fuzzy linear problem are fuzzy sets (see \cite{herrera93,herrera-verdegay95,herrera-verdegay91,herrera-verdegay96}).  In \cite{lofti09}, the authors point out that there is no general method for finding the fuzzy optimal solution of fully fuzzy linear programming (FFLP) problems, and proposed a new method for solving FFLP problems with equality constraints when the parameters and variables are assumed to be symmetric fuzzy numbers. However, in \cite{kumar11}  the method proposed in \cite{lofti09} is criticized because its complexity and the authors claimed out that the obtained solutions are approximated, but not exact. A new method for finding the fuzzy optimal solution of (FFLP) problems with equality constraints, with triangular fuzzy numbers involved, although they use ranking function (see \cite{arana15}, and the references there in) to compare the objective function values. In this way, Khan et al. \cite{khan13} deal with (FFLP) with inequalities, and they also compare the objective function values via ranking functions (see also \cite{bhardwaj14,khan17}). Das and G\"o\c cken \cite{das2014} apply a ranking method for the reviewer assignment problem. Concerning the complexity, FFLP is NP-hard, since its crisp version is. However, in \cite{BP15} it is proved that if the fuzzy numbers involved in the problem are totally described by a finite ranking system, certain encoding of the solutions can be found in polynomial-time when the number of integer variables is fixed, and the solutions can be enumerated using a polynomial delay algorithm.

Here, we analyze fully fuzzy versions of minimax mixed integer linear programming problems. As mentioned above, the crisp version of the problem can be easily formulated as another mixed integer programming by identifying the maximum of a finite set of objective function with a new auxiliary variable and adequately representing it in the set of constraints. Nevertheless, in a fuzzy environment, each objective to be considered in the minimax approach is itself a fuzzy number. Hence, one needs a suitable representation of the maximum of a finite set of fuzzy sets to replicate the crisp strategy. We introduce the notion of minimal upper bound of a set of fuzzy numbers, and prove some structural properties about it. This notion allows us to extend the classical minimax optimization problems to their fuzzy counterpart.

The paper is organized as follows. In Section \ref{sec:1} some notation and preliminary results are stated. Section \ref{sec:2} is devoted to the analysis of fully fuzzy minimax mixed integer programming problems. There, we present two different fuzzy optimization models for the problem and we prove the equivalence between them. In Section \ref{sec:3}, the problem is equivalently reformulated as a three-objective mixed integer programming problem. In Section \ref{sec:4} we apply the framework to a well known problem in Location Analysis, the center capacitated facility location problem. Finally, in Section \ref{sec:5} we draw some conclusions of the paper.


\section{Preliminaries and Notation}
\label{sec:1}
In this section we introduce the notation used through the rest of the paper. We also recall here some preliminary results on fuzzy sets that will be useful in our development.

A fuzzy set defined on $\mathbb{R}^{n}$ is a mapping (also known as \textit{the membership function})
$\mu: \mathbb{R}^{n}\rightarrow \lbrack 0,1]$, which for any $x\in \R^n$, represent the degree of truth of being such a value. For each fuzzy set
$\mu$, we denote its $\alpha $-level set
by $[\mu]^{\alpha }=\{x\in \mathbb{R}^{n}$ $|$ $\mu(x)\geq \alpha \}$ for any $%
\alpha \in [0,1]$. The support of $\mu$ is defined as $\supp(u)=\{x\in \mathbb{R}^{n}$ $|$ $\mu(x)>0\}$. Observe that the closure of
$\supp(\mu)$ defines the $0$-level set of $\mu$, .i.e. $[\mu]^{0}=cl(\supp(\mu))$
where here $cl(M)$ stands for the closure of the subset $M\subset \mathbb{R}^{n}$. Fuzzy numbers are particular fuzzy sets defined over $\R$ (see \cite{Dubois78,Dubois80}).

\begin{definition}
	A fuzzy set $\mu$ on $\mathbb{R}$ is said to be a fuzzy number if the following properties are verified:
	\begin{enumerate}
		\item There exists $x_0 \in \mathbb{R}$ such that $\mu(x_0)=1$,
		\item $\mu$ is an upper semi-continuous function,
		\item $\mu(\lambda x+(1-\lambda )y)\geq \min\{\mu(x),\mu(y)\},$ for all $x,y\in \mathbb{R}$ and  $\lambda \in \lbrack 0,1]$, and
		\item $[\mu]^0$ is compact.
	\end{enumerate}
\end{definition}

Let $\mathcal{F}$ denote the family of all fuzzy numbers and let $\mathcal{K}$ denote the family of all bounded closed
intervals in $\mathbb{R}$, i.e.,
\begin{equation*}
\mathcal{K}=\left\{ \left[ \underline{a},\overline{a}\right] \;|\;%
\underline{a},\overline{a}\in \mathbb{R}\mbox{ and
}\underline{a}\leq \overline{a}\right\} ,
\end{equation*}
It is clear that for any $\mu\in \mathcal{F}$, its level sets $[\mu]^{\alpha }\in
\mathcal{K}$ for all $\alpha \in [0,1]$. Then,  $[\mu]^{\alpha }=\left[ \underline{\mu}_{\alpha }, \overline{\mu}_{\alpha }\right] ,$ $\underline{\mu}_{\alpha },\overline{\mu}%
_{\alpha }\in \mathbb{R}$ for all $\alpha \in [0,1]$.

There are many parametrical families of fuzzy numbers that have been applied to measure imprecision in several situations. Among the most popular one can find the L-R, trapezoidal, triangular, gaussian, quasi-gaussian, quasi-quadric, exponential, quasi-exponential, and singleton fuzzy numbers (see  \cite{hans05} for a complete description of these families). One of the most used families of fuzzy numbers, because of its easy modeling and interpretation, are triangular fuzzy numbers (see, for instance, \cite{Dubois78, Dubois80,kaufmann85,khan13,lofti09,stefanini06}). 
\begin{definition}\label{triangular number}
	A fuzzy number  $\mu$ is said a triangular fuzzy number if there exist $\mu^-$, $\hat{\mu}$, $\mu^+ \in \R$ with $\mu^-<\hat \mu < \mu^+$, such that:
	$$
	\mu(x)=\left\{
	\begin{array}{lr}
	\dfrac{x-\mu^-}{\hat \mu- \mu^-} &\mbox{ if } \mu^- \le x\le \hat \mu,\\
	\dfrac{\mu^+-x}{\mu^+-\hat \mu} &\mbox{ if } \hat \mu< x\le \mu^+,\\
	0 & \mbox{otherwise}.
	\end{array}
	\right.$$
	In such a case $\mu$ is denoted as the triplet $(\mu^-,\hat{\mu},\mu^+)$. 	\end{definition}
The set of triangular fuzzy numbers will be denoted as $\mathcal{T}$. Abusing of notation, the degenerated case in which $\mu^-=\hat \mu=\mu^+ \in \R$, i.e.:
	$$(\hat \mu, \hat \mu, \hat \mu) \equiv \mu(x)=\left\{
	\begin{array}{lr}
	1 &\mbox{ if } x=\hat{\mu},\\
	0 & \mbox{otherwise}.
	\end{array}
	\right.$$ is also considered a triangular fuzzy number which is identified with the crisp number $\hat{\mu}$. In particular, $\tilde{0}=(0,0,0)$ will allow to model the nonnegativity of a triangular fuzzy number.

In Figure \ref{fig:t0} we show the shape of a triangular fuzzy number.

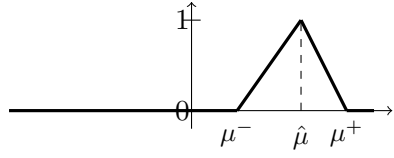
\begin{figure}[h]
\begin{center}
\begin{tikzpicture}[scale=1.2]
\draw[->] (-2,0)--(2.2,0);
\draw[->] (0,-0.2)--(0,1.2);

\node at (-0.1,1) {$1$};
\node at (-0.1,0) {$0$};

\draw (-0.1,1)--(0.1,1);
\draw (-0.1,0)--(0.1,0);

\draw[dashed] (1.2,0)--(1.2,1);

\draw[very thick] (-2,0)--(0.5,0);
\draw[very thick] (0.5,0)--(1.2,1);
\draw[very thick] (1.2,1)--(1.7,0);
\draw[very thick] (1.7,0)--(2,0);

\node at (0.5,-0.25) {\small$\mu^-$};
\node at (1.2,-0.29) {\small$\hat \mu$};
\node at (1.7,-0.25) {\small$\mu^+$};
\end{tikzpicture}
\end{center}
\caption{Graph of the triangular fuzzy number $(\mu^-,\hat{\mu},\mu^+)$.\label{fig:t0}}
\end{figure}

Note that given a triangular fuzzy number $\mu \equiv (\mu^-,\hat{\mu},\mu^+)$, its $\alpha$-levels have the following shape:
$$
[\mu]^\alpha=[\mu^-+(\hat\mu-\mu^-)\alpha, \mu^+-(\mu^+-\hat \mu)\alpha], \mbox{for all $\alpha\in[0,1]$. }
$$
Thus, it is not difficult to see that this shape of $\alpha$-levels sets completely characterizes triangular fuzzy numbers (see \cite{Goestschel}).

Given a fuzzy number $\mu=(\underline{\mu},\overline{\mu})$,  we say that $\mu$ is a nonnegative fuzzy number if  $\underline{\mu}_{0}\geq 0$. Hence, a nonnegative triangular fuzzy number $(\mu^-,\hat{\mu},\mu^+)$ is characterized by $\mu^-\geq 0$. The set of nonnegative fuzzy numbers is denoted by $\mathcal{T}_{\geq 0}$.

Interval arithmetic extend to fuzzy numbers. Let $\mu, \nu \in \mathcal{F}$ represented by their level sets$\left[ \underline{\mu}_{\alpha },\overline{\mu}_{\alpha }%
\right] $ and $\left[ \underline{\nu}_{\alpha },\overline{\nu}_{\alpha }\right] $%
, respectively. Let $\lambda \in \mathbb{R}$, then the addition $%
\mu+\nu$ and scalar multiplication $\lambda \nu$ are defined as follows:
$$
(\mu+\nu)(x)=\sup_{y+z=x}\min \{\mu(y),\nu(z)\}, \quad 
(\lambda \mu)(x)=\left\{
\begin{array}{ll}
\mu\left( \frac{x}{\lambda }\right) , & \hbox{if}\text{ \ }\lambda \neq 0, \\
0, & \hbox{if }\text{ \ }\lambda =0.%
\end{array}%
\right.
$$
Which, in terms of the $\alpha$-level sets it is equivalent to ,
\begin{equation}
\lbrack \mu+\nu]^{\alpha }=\left[ (\underline{\mu+\nu})_{\alpha },(\overline{\mu+\nu}%
)_{\alpha }\right] =\left[ \underline{\mu}_{\alpha
}+\underline{\nu}_{\alpha }\;,\;\overline{\mu}_{\alpha
}+\overline{\nu}_{\alpha }\right]   \label{Eqsum}
\end{equation}%
and
\begin{equation}
\lbrack \lambda \mu]^{\alpha }=\left[ (\underline{\lambda \mu})_{\alpha },(%
\overline{\lambda \mu})_{\alpha }\right] =\left[ \min \{\lambda \underline{\mu}%
_{\alpha },\lambda \overline{\mu}_{\alpha }\},\max \{\lambda \underline{\mu}%
_{\alpha },\lambda \overline{\mu}_{\alpha }\}\right] .  \label{Eqmul}
\end{equation}
for all $\alpha \in \lbrack 0,1]$.

The above operations on general fuzzy numbers are particularized to triangular fuzzy number in the following lemma whose proof is straightforward.

\begin{lemma}\label{lemma:1}
Let $\tilde{\mu}=(\mu^-,\hat{\mu},\mu^+)$ and $\tilde{\nu}=(\nu^-,\hat{\nu},\nu^+)$ two triangular fuzzy numbers and $\lambda \in \R$. Then:
\begin{itemize}
	\item[(i)]  $\tilde{\mu}+\tilde{\nu}=(\mu^-+\nu^-,\hat{\mu}+\hat{\nu},\mu^++\nu^+)$.
	\item[(ii)]  $\lambda \tilde{\mu}=\left\{\begin{array}{cl}
	(\lambda \mu^-,\lambda \hat{\mu},\lambda \mu^+) & \mbox{if $\lambda \ge 0$,}\\
	(\lambda \mu^+,\lambda \hat{\mu},\lambda \mu^-) & \mbox{if $\lambda < 0$.}
	\end{array}\right.$.
	\item[(iii)] If $\tilde{\nu}$ is a nonnegative triangular fuzzy number, then
	\begin{equation}
	\label{multiplication}
	\tilde{\mu}\tilde{\nu}=\left\{\begin{array}{ll}
	(\mu^-\nu^-,\hat{\mu}\hat{\nu},\mu^+\nu^+) & \mbox{ if } \mu^-\ge 0,\\
	(\mu^-\nu^+,\hat{\mu}\hat{\nu},\mu^+\nu^+) & \mbox{ if } \mu^-<0,\, \mu^+\ge 0,\\
	(\mu^-\nu^+,\hat{\mu}\hat{\nu},\mu^+\nu^-) & \mbox{ if } \mu^+<0.
	\end{array}\right.
	\end{equation}
\end{itemize}
\end{lemma}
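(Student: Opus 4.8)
The plan is to reduce all three identities to computations on $\alpha$-level sets, exploiting the fact recalled just above the lemma that the linear shape $[\mu]^\alpha = [\mu^-+(\hat\mu-\mu^-)\alpha,\ \mu^+-(\mu^+-\hat\mu)\alpha]$ completely characterizes a triangular fuzzy number. Thus for each part it suffices to compute the level sets of the left-hand side through the interval formulas (\ref{Eqsum}) and (\ref{Eqmul}), and then to recognize the resulting endpoints as the level set of the triangular number asserted on the right.

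For (i) I would substitute the explicit endpoints $\underline{\mu}_\alpha = \mu^-+(\hat\mu-\mu^-)\alpha$ and $\overline{\mu}_\alpha = \mu^+-(\mu^+-\hat\mu)\alpha$ (and the analogous ones for $\nu$) into (\ref{Eqsum}). Collecting the constant and the $\alpha$ terms yields $[\mu+\nu]^\alpha = [(\mu^-+\nu^-)+((\hat\mu+\hat\nu)-(\mu^-+\nu^-))\alpha,\ (\mu^++\nu^+)-((\mu^++\nu^+)-(\hat\mu+\hat\nu))\alpha]$, which is exactly the level set of $(\mu^-+\nu^-,\hat\mu+\hat\nu,\mu^++\nu^+)$; the characterization then closes the case.

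For (ii) I would apply (\ref{Eqmul}). Since $\underline{\mu}_\alpha\le\overline{\mu}_\alpha$, the sign of $\lambda$ alone decides the $\min$ and $\max$: for $\lambda\ge0$ one obtains $[\lambda\underline{\mu}_\alpha,\lambda\overline{\mu}_\alpha]$, whose endpoints expand to the level set of $(\lambda\mu^-,\lambda\hat\mu,\lambda\mu^+)$, whereas for $\lambda<0$ the order reverses and one gets $[\lambda\overline{\mu}_\alpha,\lambda\underline{\mu}_\alpha]$, the level set of $(\lambda\mu^+,\lambda\hat\mu,\lambda\mu^-)$. Here I would also observe that in the second case $\lambda\mu^+<\lambda\hat\mu<\lambda\mu^-$, so the listed triple is indeed admissible as a triangular number.

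Part (iii) is where the real work lies, and I would treat it as the main obstacle. The core (level $\alpha=1$) is always $\hat\mu\hat\nu$, so only the endpoints at $\alpha=0$, i.e. the interval product $[\mu^-,\mu^+]\cdot[\nu^-,\nu^+]$, require attention; the hypothesis $\nu^-\ge0$ forces both endpoints of $[\nu]^\alpha$ to be nonnegative for every $\alpha$. I would then run the three-way case split on the position of $[\mu^-,\mu^+]$ relative to $0$ — entirely nonnegative ($\mu^-\ge0$), straddling zero ($\mu^-<0\le\mu^+$), and entirely nonpositive ($\mu^+<0$) — and in each case use $\nu^-\ge0$ together with the monotonicity of multiplication by a fixed sign to determine which of the four products $\mu^\pm\nu^\pm$ attains the minimum (left endpoint) and the maximum (right endpoint), recovering precisely the three triples in (\ref{multiplication}). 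The subtlety I would be careful to flag is that the genuine product of the level endpoints is quadratic in $\alpha$, so the product is not literally triangular; the triangular output in (\ref{multiplication}) is the fuzzy number determined by matching the support (level $0$) and the core (level $1$) of the true product, which is the convention under which the statement holds and under which the class $\mathcal{T}$ is kept closed under multiplication.
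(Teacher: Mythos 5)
Your proof is correct, and it follows the route the paper implicitly intends --- the paper itself omits the argument entirely, declaring it ``straightforward'' --- namely, particularizing the level-set arithmetic (\ref{Eqsum})--(\ref{Eqmul}) to the linear $\alpha$-level sets of triangular numbers and invoking the fact that this level-set shape characterizes membership in $\mathcal{T}$. Parts (i) and (ii) are exactly this computation, and your sign-based case analysis in (iii) correctly identifies which products of endpoints realize the minimum and maximum of the support interval $[\mu^-,\mu^+]\cdot[\nu^-,\nu^+]$ in each of the three cases. The genuinely valuable content of your write-up is the caveat you attach to (iii): the paper defines only addition and scalar multiplication of fuzzy numbers in its preliminaries, never the product of two fuzzy numbers, and under the Zadeh extension-principle product the level-set endpoints of $\tilde{\mu}\tilde{\nu}$ are \emph{quadratic} in $\alpha$, so the product of two triangular fuzzy numbers is not triangular and \eqref{multiplication} is false as a literal identity. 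It becomes true only under the standard convention --- used implicitly by the paper, for instance in the straight-sided product drawn in Figure \ref{fig:t1} --- that the product is defined (or approximated) as the triangular number matching the core ($\alpha=1$) and the support ($\alpha=0$) of the true product. Making that convention explicit, as you do, is what turns (iii) from a gloss into a correct statement, and it is a point the paper skips over.
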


In Figure \ref{fig:t1} we illustrate the results of the above operations with triangular fuzzy numbers.

\begin{figure}[h]
\begin{center}
\begin{tikzpicture}[scale=1]
\draw[->] (-1,0)--(4.2,0);
\draw[->] (0,-0.2)--(0,1.2);

\node at (-0.1,1) {$1$};
\node at (-0.1,0) {$0$};

\draw (-0.1,1)--(0.1,1);
\draw (-0.1,0)--(0.1,0);

\draw[very thick, dashed] (-1,0)--(0.5,0);
\draw[very thick, dashed] (0.5,0)--(1.2,1);
\draw[very thick, dashed] (1.2,1)--(1.7,0);
\draw[very thick, dashed] (1.7,0)--(4,0);

\draw[very thick, dotted] (-1,0)--(0.9,0);
\draw[very thick, dotted] (0.9,0)--(1.8,1);
\draw[very thick, dotted] (1.8,1)--(2,0);
\draw[very thick, dotted] (2,0)--(4,0);

\draw[very thick] (-1,0)--(1.4,0);
\draw[very thick] (1.4,0)--(3,1);
\draw[very thick] (3,1)--(3.7,0);
\draw[very thick] (3.7,0)--(4,0);

\node[above] at (1.2,1) {$\small \mu$};
\node[above] at (1.8,1) {$\small \nu$};
\node[above] at (3,1) {$\small \mu+\nu$};

\end{tikzpicture}~\begin{tikzpicture}[scale=1]
\draw[->] (-1.2,0)--(2.2,0);
\draw[->] (0,-0.2)--(0,1.2);

\node at (-0.1,1) {$1$};
\node at (-0.1,0) {$0$};

\draw (-0.1,1)--(0.1,1);
\draw (-0.1,0)--(0.1,0);

\draw[very thick, dashed] (-1,0)--(0.5,0);
\draw[very thick, dashed] (0.5,0)--(1.2,1);
\draw[very thick, dashed] (1.2,1)--(1.7,0);
\draw[very thick, dashed] (1.7,0)--(2,0);

\draw[very thick] (-1,0)--(0.25,0);
\draw[very thick] (0.25,0)--(0.6,1);
\draw[very thick] (0.6,1)--(0.85,0);
\draw[very thick] (0.85,0)--(2,0);

\draw[very thick] (-1,0)--(-0.85,0);
\draw[very thick] (-0.85,0)--(-0.6,1);
\draw[very thick] (-0.6,1)--(-0.25,0);
\draw[very thick] (-0.25,0)--(2,0);

\node[above] at (1.2,1) {$\small \mu$};
\node[above] at (0.6,1) {$\small \frac{1}{2}\mu$};
\node[above] at (-0.6,1) {$\small -\frac{1}{2}\mu$};

\end{tikzpicture}~\begin{tikzpicture}[scale=1]
\draw[->] (-1,0)--(4.2,0);
\draw[->] (0,-0.2)--(0,1.2);

\node at (-0.1,1) {$1$};
\node at (-0.1,0) {$0$};

\draw (-0.1,1)--(0.1,1);
\draw (-0.1,0)--(0.1,0);

\draw[very thick, dashed] (-1,0)--(0.5,0);
\draw[very thick, dashed] (0.5,0)--(1.2,1);
\draw[very thick, dashed] (1.2,1)--(1.7,0);
\draw[very thick, dashed] (1.7,0)--(4,0);

\draw[very thick, dotted] (-1,0)--(0.9,0);
\draw[very thick, dotted] (0.9,0)--(1.8,1);
\draw[very thick, dotted] (1.8,1)--(2,0);
\draw[very thick, dotted] (2,0)--(4,0);

\draw[very thick] (-1,0)--(0.45,0);
\draw[very thick] (0.45,0)--(2.16,1);
\draw[very thick] (2.16,1)--(3.4,0);
\draw[very thick] (3.4,0)--(4,0);

\node[above] at (1.2,1) {$\small \mu$};
\node[above] at (1.8,1) {$\small \nu$};
\node[above] at (3,1) {$\small \mu\nu$};

\end{tikzpicture}
\end{center}
\caption{Operations of triangular fuzzy numbers.\label{fig:t1}}
\end{figure}
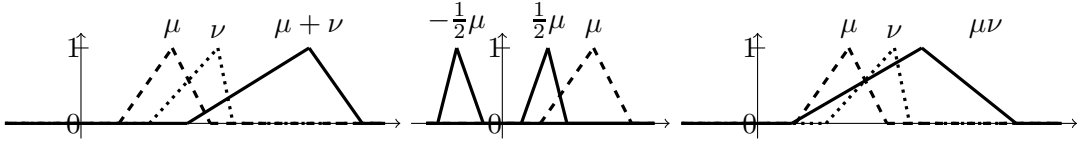

In order to compare two fuzzy numbers, there exist several definitions, extending binary relations on intervals (see \cite{guerra12}). In this regard, given $\mu,\nu \in\Fc$, we write their $\alpha$-levels as
$\mu_{\alpha}=[\underline{\mu}_{\alpha},\overline{\mu}_{\alpha}]\in \Kc$ and
$\nu_{\alpha}=[\underline{\nu}_{\alpha},\overline{\nu}_{\alpha}]\in \Kc$, respectively, for all $\alpha\in [0,1]$.
\begin{definition}\label{def partial orders}
	Given $u, v\in\Fc$, we say that:
	\begin{itemize}
		\item[(i)] $\mu\lessapprox \nu$ if and only if 
		$\underline{\mu}_{\alpha}\leq \underline{\nu}_{\alpha}$ and $\overline{\mu}_{\alpha}\leq \overline{\nu}_{\alpha}$, for all $\alpha\in [0,1]$,
		\item[(ii)] $\mu\prec \nu$ if and only if 	$\underline{\mu}_{\alpha}< \underline{\nu}_{\alpha}$ and  $\overline{\mu}_{\alpha}< \overline{\nu}_{\alpha}$, for all $\alpha\in [0,1]$,
		\item[(iii)] $\mu\preceq \nu$ if and only if $\mu\lessapprox \nu$ and $\mu\ne \nu$.
		
	\end{itemize}
\end{definition}

Checking whether any of the above binary relations is verified for a pair of triangular fuzzy numbers, can be done using only the extreme points of the two fuzzy numbers, as stated in the following result.
\begin{theorem}\label{order characterization}
	Given two triangular fuzzy numbers $\tilde{\mu}=(\mu^-, \hat{\mu}, \mu^+)$ and $\tilde{\nu}=(\nu^-, \hat{\nu}, \nu^+)$, it follows that 
	\begin{itemize}
		\item [(i)] $\tilde{\mu}\prec\tilde{\nu}$ if and only if $\mu^-<\nu^-$, $\hat{\mu}<\hat{\nu}$ and $\mu^+<\nu^+$.
		\item[(ii)] $\tilde{\mu}\lessapprox\tilde{\nu}$ if and only if $\mu^-\le \nu ^-$, $\hat{\mu}\le \hat{v}$ and $\mu^+\le v^+$.
		\item[(iii)] $\tilde{\mu}\preceq\tilde{\nu}$ if and only if $\mu^-\le \nu^-$, $\hat{\mu}\le \hat{\nu}$ and $\mu^+\le \nu^+$, where some inequality is strict.
	\end{itemize}
\end{theorem}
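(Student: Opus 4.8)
The plan is to translate each order relation, which by Definition~\ref{def partial orders} is a condition on the endpoints $\underline{\mu}_\alpha,\overline{\mu}_\alpha$ of the $\alpha$-levels for all $\alpha\in[0,1]$, into a finite set of inequalities on the defining triplets. The key observation is that, for a triangular fuzzy number, both endpoint functions are affine in $\alpha$: from the explicit form of the $\alpha$-levels recalled above,
$$
\underline{\mu}_\alpha = \mu^- + (\hat{\mu}-\mu^-)\alpha, \qquad \overline{\mu}_\alpha = \mu^+ - (\mu^+-\hat{\mu})\alpha,
$$
and likewise for $\tilde{\nu}$. Since an affine function on the interval $[0,1]$ attains both its minimum and its maximum at an endpoint of the interval, a strict or non-strict inequality between two affine functions of $\alpha$ holds for every $\alpha\in[0,1]$ if and only if it holds at the two endpoints $\alpha=0$ and $\alpha=1$. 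Evaluating there gives $\underline{\mu}_0=\mu^-$, $\overline{\mu}_0=\mu^+$, and $\underline{\mu}_1=\overline{\mu}_1=\hat{\mu}$, with the analogous identities for $\tilde{\nu}$.

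For part (ii) I would apply this reduction to the two conditions defining $\lessapprox$. The inequality $\underline{\mu}_\alpha\le\underline{\nu}_\alpha$ for all $\alpha$ is equivalent to its validity at $\alpha=0$ and $\alpha=1$, that is, to $\mu^-\le\nu^-$ together with $\hat{\mu}\le\hat{\nu}$; similarly, $\overline{\mu}_\alpha\le\overline{\nu}_\alpha$ for all $\alpha$ is equivalent to $\mu^+\le\nu^+$ together with $\hat{\mu}\le\hat{\nu}$. Conjoining the two, the redundant copy of $\hat{\mu}\le\hat{\nu}$ collapses and one obtains exactly $\mu^-\le\nu^-$, $\hat{\mu}\le\hat{\nu}$, $\mu^+\le\nu^+$. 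Part (i) is handled identically, replacing every ``$\le$'' by ``$<$''; the endpoint reduction remains valid for strict inequalities because the minimum of an affine difference such as $\underline{\nu}_\alpha-\underline{\mu}_\alpha$ over $[0,1]$ is still attained at an endpoint, so strict positivity at both endpoints forces strict positivity throughout.

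For part (iii) I would combine part (ii) with the characterization of equality. By Definition~\ref{def partial orders}(iii), $\tilde{\mu}\preceq\tilde{\nu}$ means $\tilde{\mu}\lessapprox\tilde{\nu}$ and $\tilde{\mu}\neq\tilde{\nu}$. The first condition yields $\mu^-\le\nu^-$, $\hat{\mu}\le\hat{\nu}$, $\mu^+\le\nu^+$ by part (ii). Since triangular fuzzy numbers are completely determined by their triplet (their $\alpha$-levels characterize the number), the equality $\tilde{\mu}=\tilde{\nu}$ is equivalent to the three equalities $\mu^-=\nu^-$, $\hat{\mu}=\hat{\nu}$, $\mu^+=\nu^+$; hence $\tilde{\mu}\neq\tilde{\nu}$ forces at least one of the three inequalities to be strict, which is precisely the stated condition. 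The only point requiring a little care---and the closest thing to an obstacle---is the endpoint-reduction step itself: one must justify that testing at $\alpha\in\{0,1\}$ suffices, which rests on the affineness (equivalently, the simultaneous convexity and concavity) of the endpoint functions in $\alpha$. Beyond this, the argument amounts to a routine evaluation at the two endpoints.
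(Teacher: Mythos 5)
Your proof is correct and follows essentially the same route as the paper's: both exploit that the $\alpha$-level endpoints of a triangular fuzzy number are affine in $\alpha$, so each order relation reduces to its evaluation at $\alpha=0$ and $\alpha=1$. If anything, your converse argument (the minimum of the affine difference $\underline{\nu}_\alpha-\underline{\mu}_\alpha$ over $[0,1]$ is attained at an endpoint) is slightly tidier than the paper's convex-combination write-up, which multiplies strict inequalities by the factors $(1-\alpha)$ and $\alpha$ that vanish at $\alpha\in\{0,1\}$, and you also spell out parts (ii) and (iii), which the paper leaves to the reader.
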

\begin{proof}
	(i) First, we suppose that $\tilde{\mu}\prec\tilde{\nu}$. The $\alpha$-level sets are:
	 $$
	 [\tilde{\mu}]^\alpha=[\mu^-+(\hat{\mu}-\mu^-)\alpha, \mu^+-(\mu^+-\hat{\mu})\alpha] \text{ and } [\tilde{\nu}]^\alpha=[ \nu^-+(\hat{\nu}-\nu^-)\alpha,  \nu^+-(\nu^+-\hat{\nu})\alpha]
	 $$
	 for all $\alpha\in [0,1]$.
	 Then, by Definition \ref{def partial orders}, we get that $\underline{\tilde{\mu}}_{\alpha}< \underline{\tilde{\nu}}_{\alpha}$ and  $\overline{\tilde{\mu}}_{\alpha}< \overline{\tilde{\nu}}_{\alpha}$, for all $\alpha\in [0,1]$, i.e., 
	$$
	\mu^-+(\hat{\mu}-\mu^-)\alpha <\nu^-+(\hat{\nu}- \nu^-)\alpha  \text{ and }  \mu^+-(\mu^+-\hat{\mu})\alpha <\nu^+-(\nu^+-\hat{\nu})\alpha,
	$$
	for all $\alpha\in [0,1]$. Hence, for $\alpha=0$, we get that $\mu^-<\nu^-$ and $\mu^+<\nu^+$, and for $\alpha=1$,  that $\hat{\mu}<\hat{\nu}$.
	
	Conversely, let us suppose that $\mu^-<\nu^-$, $\hat{\mu}<\hat{\nu}$ and $\mu^+<v^+$. Let us check whether the conditions $\underline{\tilde{\mu}}_{\alpha}< \underline{\tilde{\nu}}_{\alpha}$ and  $\overline{\tilde{\mu}}_{\alpha}< \overline{\tilde{\nu}}_{\alpha}$ are verified for any $\alpha\in [0,1]$. Let $\alpha\in [0,1]$. By hypothesis we have that:
	$$
	(1-\alpha)\mu^- <(1-\alpha)v^-\quad\mbox{and}\quad \hat{\mu}\alpha <\hat{\nu}\alpha.
	$$
	Combining these two inequalities, we have that
	$$
	(1-\alpha)\mu^- +\hat{\mu}\alpha<(1-\alpha)\nu^- +\hat{\nu}\alpha.
	$$
	On the other hand, we also have that $(1-\alpha)\mu^+ <(1-\alpha)\nu^+$ and $\hat{\mu}\alpha <\hat{\nu}\alpha,$ implying that that $(1-\alpha)\mu^+ +\hat{\mu}\alpha<(1-\alpha)\nu^+ +\hat{\nu}\alpha,$. Therefore,  $\overline{\tilde{\mu}}_{\alpha}< \overline{\tilde{\nu}}_{\alpha}$. Thus, $\tilde{\mu}\prec\tilde{\nu}$. The proofs of (ii) and (iii) is similar to (i) and are left for the reader.
\end{proof}

Finally, we are interested in suitable representations of the maximum of a finite set of triangular fuzzy numbers. Several definitions have been proposed in the literature, the most popular being the maximum approximation concept \cite{palacios17,fortemps97}, in which the maximum of the finite set of triangular fuzzy numbers $\{\tilde{a}_1,\tilde{a}_2,\dots,\tilde{a}_r\}$ is given by the following triangular fuzzy number:
\begin{equation}\label{max}
\max \{\tilde{a}_1,\tilde{a}_2,\dots,\tilde{a}_r\}=(\max \{a^{-}_{1}, a^{-}_{2},\dots ,a^{-}_{r}\},\max \{\hat{a}_1,\hat{a}_2,\dots ,\hat{a}_r\}, \max \{a^{+}_{1}, a^{+}_{2},\dots ,a^{+}_{r}\}).
\end{equation}
One of the main critics to this definition is that the maximum of a finite set of triangular fuzzy numbers is not always an element of the set of triangular fuzzy numbers. However, we have the following relationship whose proof is straightforward from Theorem \ref{order characterization} and (\ref{max}).

\begin{proposition}
	Given the fuzzy triangular numbers $\tilde{a}_1,\tilde{a}_2,\dots,\tilde{a}_r$, with $r\in\N$, if there exist $i_0\in\{1,\dots, r\}$ such that $\tilde{a}_i\lessapprox\tilde{a}_{i_0}$, for all $i\in \{1,\dots, r\}$, then
	$$\text{max } \{\tilde{a}_1,\tilde{a}_2,\dots,\tilde{a}_r\}= \tilde{a}_{i_0}.$$
\end{proposition}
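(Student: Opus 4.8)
The plan is to reduce the statement entirely to the scalar level, exploiting the fact that both objects involved are defined componentwise on the three parameters $(\cdot^-,\hat{\cdot},\cdot^+)$ of a triangular number: the maximum approximation in (\ref{max}) takes separate maxima in each coordinate, and the order $\lessapprox$ admits an equally coordinatewise characterization for triangular numbers by Theorem \ref{order characterization}(ii). Once both are expressed this way, the identity becomes a one-line consequence.

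First I would invoke Theorem \ref{order characterization}(ii) to rewrite the hypothesis. The assumption $\tilde{a}_i\lessapprox\tilde{a}_{i_0}$ for all $i\in\{1,\dots,r\}$ is then equivalent to the three families of scalar inequalities $a_i^-\le a_{i_0}^-$, $\hat{a}_i\le\hat{a}_{i_0}$ and $a_i^+\le a_{i_0}^+$, each valid for every index $i$. Next, since $i_0$ itself belongs to $\{1,\dots,r\}$, these inequalities state precisely that $a_{i_0}^-$, $\hat{a}_{i_0}$ and $a_{i_0}^+$ are upper bounds of the sets $\{a_i^-\}_i$, $\{\hat{a}_i\}_i$ and $\{a_i^+\}_i$ that are in addition attained within each set. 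Consequently each of the three scalar maxima appearing in (\ref{max}) coincides with the corresponding parameter of $\tilde{a}_{i_0}$, i.e. $\max\{a_1^-,\dots,a_r^-\}=a_{i_0}^-$, $\max\{\hat{a}_1,\dots,\hat{a}_r\}=\hat{a}_{i_0}$ and $\max\{a_1^+,\dots,a_r^+\}=a_{i_0}^+$.

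Finally I would substitute these three equalities into the right-hand side of (\ref{max}), obtaining the triangular number $(a_{i_0}^-,\hat{a}_{i_0},a_{i_0}^+)$, which is by definition $\tilde{a}_{i_0}$. This is exactly the asserted identity, so the proof closes.

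There is essentially no genuine obstacle here, which is why the excerpt labels the proof straightforward: the result follows immediately from the componentwise structure of both the order and the maximum approximation. The only point that truly requires care, rather than mere manipulation, is the observation that $i_0$ lies in the index set, so that each scalar maximum is actually \emph{attained} at $i_0$ and not merely bounded above by the corresponding parameter of $\tilde{a}_{i_0}$; without this remark one would only conclude an inequality rather than the equality claimed.
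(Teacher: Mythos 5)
Your proof is correct and coincides with the argument the paper intends: the paper gives no written proof, remarking only that the result is ``straightforward from Theorem \ref{order characterization} and \eqref{max}'', which is precisely the componentwise reduction you carried out (including the key observation that $i_0$ belongs to the index set, so each scalar maximum is attained rather than merely bounded).
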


\begin{definition}
	We say that a fuzzy number $\tilde{\mu}$ is an upper bound for a finite set of fuzzy numbers, $S$, if $\tilde{s}\lessapprox \tilde{\mu}$ for all $\tilde{s}\in S$. The set of upper bounds for $S$ will be denoted by $\mathcal{U}(S)$
\end{definition}
In case that $S$ is a finite set of triangular fuzzy numbers, the following result states that $\mathcal{U}(S)\neq \emptyset$.

\begin{proposition}\label{prop:2}
	Let $S=\{\tilde{a}_1,\tilde{a}_2,\dots,\tilde{a}_r\}$ a finite set of triangular fuzzy numbers and let $\Theta(S) = \max \{\tilde{a}_1,\tilde{a}_2,\dots,\tilde{a}_r\}$. Then:
	\begin{enumerate}
	\item $\Theta(S) \in \mathcal{U}(S)$, and
	\item  $\Theta(S)$ is the unique $\lessapprox$-minimal element of $\mathcal{U}(S)$, that is, there not exists $\tilde{\mu} \in \mathcal{U}(S)\backslash\{\Theta(S)\}$ such that $\tilde{\mu} \lessapprox \Theta(S)$. 
	\end{enumerate}
\end{proposition}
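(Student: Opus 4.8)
The plan is to translate every occurrence of the relation $\lessapprox$ into the coordinatewise order on the defining triples by means of Theorem \ref{order characterization}(ii), after which both assertions reduce to elementary facts about the product order on $\R^3$. Write $\Theta(S)=(\theta^-,\hat\theta,\theta^+)$ with $\theta^-=\max_i a_i^-$, $\hat\theta=\max_i \hat a_i$ and $\theta^+=\max_i a_i^+$, following (\ref{max}). First I would check that $\Theta(S)$ is genuinely a (possibly degenerate) triangular fuzzy number, so that it can indeed belong to $\mathcal{U}(S)$: choosing an index $i$ attaining $\theta^-$ gives $\theta^-=a_i^-\le \hat a_i\le \hat\theta$, and symmetrically $\hat\theta\le\theta^+$, so the triple is ordered as required in Definition \ref{triangular number}.

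For part (1), I would simply invoke Theorem \ref{order characterization}(ii): for each fixed $i$ one has $a_i^-\le\theta^-$, $\hat a_i\le\hat\theta$ and $a_i^+\le\theta^+$, since every coordinate is bounded above by its own maximum. Hence $\tilde a_i\lessapprox\Theta(S)$ for all $i$, i.e. $\Theta(S)\in\mathcal{U}(S)$. The heart of the proof is a strengthening of part (2): I claim that $\Theta(S)$ is in fact the $\lessapprox$-\emph{least} element of $\mathcal{U}(S)$, not merely a minimal one. Indeed, let $\tilde\mu=(\mu^-,\hat\mu,\mu^+)\in\mathcal{U}(S)$ be arbitrary. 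By definition $\tilde a_i\lessapprox\tilde\mu$ for every $i$, so Theorem \ref{order characterization}(ii) yields $a_i^-\le\mu^-$, $\hat a_i\le\hat\mu$ and $a_i^+\le\mu^+$ for all $i$; taking the maximum over $i$ on the left-hand sides gives $\theta^-\le\mu^-$, $\hat\theta\le\hat\mu$ and $\theta^+\le\mu^+$, that is $\Theta(S)\lessapprox\tilde\mu$. Thus $\Theta(S)$ lies $\lessapprox$-below every element of $\mathcal{U}(S)$.

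Both the minimality and the uniqueness of part (2) now follow formally, once one records that $\lessapprox$ is antisymmetric on triangular fuzzy numbers: if $\tilde\mu\lessapprox\tilde\nu$ and $\tilde\nu\lessapprox\tilde\mu$, then every coordinate inequality holds in both directions by Definition \ref{def partial orders}, forcing equal triples and hence $\tilde\mu=\tilde\nu$. To see that $\Theta(S)$ is minimal, suppose some $\tilde\mu\in\mathcal{U}(S)\setminus\{\Theta(S)\}$ satisfied $\tilde\mu\lessapprox\Theta(S)$; combining this with $\Theta(S)\lessapprox\tilde\mu$ from the previous step and antisymmetry gives $\tilde\mu=\Theta(S)$, a contradiction. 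For uniqueness, if $\tilde\nu$ were any $\lessapprox$-minimal element of $\mathcal{U}(S)$, then the least-element property gives $\Theta(S)\lessapprox\tilde\nu$ with $\Theta(S)\in\mathcal{U}(S)$, and minimality of $\tilde\nu$ then forces $\tilde\nu=\Theta(S)$.

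I expect no serious obstacle in this argument; the only point that requires care is methodological rather than technical. One should resist proving the minimality statement directly and instead first establish the stronger claim that $\Theta(S)$ is the least element of $\mathcal{U}(S)$, from which minimality, uniqueness, and antisymmetry combine into a short and transparent conclusion.
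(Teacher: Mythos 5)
Your proof is correct, and it rests on the same two ingredients as the paper's own proof: the coordinatewise characterization of $\lessapprox$ from Theorem \ref{order characterization}(ii), and the fact that every coordinate of an upper bound of $S$ must dominate the corresponding coordinate maximum. The organization differs, though. The paper proves minimality directly by contradiction (a strict inequality such as $\mu^-<\Theta(S)^-$ is incompatible with $\tilde{\mu}$ being an upper bound), and only in the uniqueness step does it establish the domination $\Theta(S)\lessapprox\tilde{\nu}$ for an arbitrary upper bound $\tilde{\nu}$. You instead promote that domination to the central claim --- $\Theta(S)$ is the $\lessapprox$-\emph{least} element of $\mathcal{U}(S)$ --- and then both minimality and uniqueness fall out formally from antisymmetry of $\lessapprox$; this is more modular, proves a slightly stronger statement, and you additionally verify (which the paper omits) that the triple defining $\Theta(S)$ is correctly ordered, so that it is indeed a triangular fuzzy number. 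One caveat, which applies verbatim to the paper's proof as well: both arguments write an arbitrary element of $\mathcal{U}(S)$ as a triple and apply Theorem \ref{order characterization}, which implicitly restricts $\mathcal{U}(S)$ to \emph{triangular} upper bounds, whereas the paper's definition of $\mathcal{U}(S)$ allows any fuzzy number. The restriction is not cosmetic: the extension-principle maximum, whose $\alpha$-levels are $\left[\max_i \underline{(a_i)}_\alpha,\,\max_i \overline{(a_i)}_\alpha\right]$, is a (generally non-triangular) upper bound of $S$ lying $\lessapprox$-below $\Theta(S)$, so your least-element claim --- and indeed the proposition itself --- holds only with that restriction in force.
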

\begin{proof}
Theorem \ref{order characterization} and \eqref{max} imply that  $\tilde{s}\lessapprox \Theta(S)$ for all $\tilde{s}\in S$. Hence, $\Theta(S)=(\Theta(S)^-, \widehat{\Theta(S)}, \Theta(S)^+) \in \mathcal{U}(S)$. Assume now that there exists $\tilde{\mu}$ an upper bound for the set $S$ such that $\tilde{\mu}\preceq \Theta(S)$, that is, $\tilde{\mu}\lessapprox \Theta(S)$ and either ${\mu}^-<\Theta(S)^-$ or  $\hat{\mu}^{-}<\widehat{\Theta(S)}$ or  ${{\mu}}^+< \Theta(S)^+$. Without loss of generality, let us suppose that  ${{\mu}}^-< \Theta(S)^-=\max\{a^{-}_{1}, a^{-}_{2},\dots ,a^{-}_{r}\}$ (the remainder cases can be similarly analyzed). Since $\tilde{\mu}$ is an upper bound of $S$, we get that $\max\{a^{-}_{1}, a^{-}_{2},\dots ,a^{-}_{r}\}\le {\mu}^-$, contradicting the definition of $\tilde{\mu}$. Thus, $\Theta(S)$ a minimal upper bound for the set $S$. 

Let us assume now that there exists $\tilde{\nu}=(\nu^-,\hat{\nu}, \nu^+) \in \mathcal{U}(S)\backslash\{\Theta(S)\}$ such that there not exists $\tilde{\mu} \in \mathcal{U}(S)\backslash\{\tilde{\nu}\}$ such that $\tilde{\mu} \lessapprox \tilde{\nu}$. In particular, take $\tilde{\mu}=\Theta(S)$. Since $\tilde{\nu}\in \mathcal{U}(S)$, by Theorem \ref{order characterization} it follows that
$$\Big(\max\{a^{-}_{1}, a^{-}_{2},\dots ,a^{-}_{r}\},\max\{\hat{a}_1,\hat{a}_2,\dots ,\hat{a}_r\},\max\{a^{+}_{1}, a^{+}_{2},\dots ,a^{+}_{r}\}\Big)\lessapprox \tilde{\nu},$$

 what implies that $\Theta(S)\lessapprox \tilde{\nu}$. Since $\tilde{\nu}\ne\Theta(S)$, we get that $\Theta(S) \preceq \tilde{\nu}$, so $\tilde{\nu}$ is not a minimal upper bound for $S$.
\end{proof}

From the above result, given a finite set of triangular fuzzy numbers, $S=\{\tilde{a}_1,\tilde{a}_2,\dots,\tilde{a}_r\}$, the triangular fuzzy number $\Theta(S)$ is called the \textit{minimal upper bound of $S$}, and is defined as the triplet:
\begin{equation}\label{mub}
\Theta(S)=\Big(\max\{a^{-}_{1}, a^{-}_{2},\dots ,a^{-}_{r}\},\max\{\hat{a}_1,\hat{a}_2,\dots ,\hat{a}_r\},\max\{a^{+}_{1}, a^{+}_{2},\dots ,a^{+}_{r}\}\Big).
\end{equation}

Observe that in case $\tilde{a}_i$ are degenerated fuzzy numbers, i.e., $\tilde{a}_i\equiv (a_i,a_i,a_i)$ for all $i=1, \ldots, r$, ${\Theta}(\{\tilde{a}_1,\tilde{a}_2,\dots,\tilde{a}_r\})$ coincides with the crisp number $\max\{a_1, \ldots, a_r\}$. Thus, the minimal upper bound naturally extends the notion of maximum of a finite set of real numbers.

\section{Minimax fully fuzzy linear programming problem}
\label{sec:2}
In this section we present a fuzzy version of minimax mixed linear integer optimization problems, in which the notion of minimal upper bound, defined in the previous section, is involved. 

Recall that a minimax mixed linear integer programming problem consists of the following optimization problem:
\begin{align}
\min & \max \{c_1^tx, \ldots, c_r^tx\}\label{mm0}\\
\mbox{s.t. } & A x \;\; {\leq\choose =} \;\; b,\\
& x_i \geq 0, \forall i \in N,\\
& x_i \in \{0,1\}, \forall i \in Z.\label{mmf}
\end{align}
where $A \in \mathbb{R}^{m\times n}$ and $b\in \R^n$. The sets of indices $N, Z \subseteq \{1, \ldots, n\}$ indicate which variables are assumed to be nonnegative ($N$) or binary ($B$). Here, ${\leq\choose =}$ stands for the an element in $\{\leq, =\}^m$ indicating if the corresponding  constraint is in inequality or equation form. Several applications of minimax optimization can be found in the literature: facility location \cite{garfinkel,regret}, transportation \cite{ahuja86}, flow \cite{CCTT09}, scheduling \cite{graham}, resource allocation \cite{Kaplan74}, portfolio selection \cite{young} only to mention a few.

It is usual, to solve minimax problems as the above by rewriting the problem as an standard mixed integer linear programming problem by adding a new auxiliary variable $\theta$ represent $\max \{c_1^tx, \ldots, c_r^tx\}$. Then, \eqref{mm0}-\eqref{mmf} is equivalent to:
\begin{align}
\min & \;\; \theta\label{mm00}\\
\mbox{s.t. } & c_j^tx \leq \theta, \forall j=1, \ldots, r,\\
&A x \;\; {\leq\choose =} \;\; b,\\
& \alpha \in \R,\\
& x_i \geq 0, \forall i \in N,\\
& x_i \in \{0,1\}, \forall i \in Z.\label{mmff}
\end{align}

In what follows, we consider a fuzzy version of \eqref{mm0}--\eqref{mmf} in which the parameters in the objective functions and constraints, as well as the nonnegative variables involved in the problem are triangular fuzzy numbers. 

Let us now consider $\tilde{A}=(\tilde{a}_{ij}) \in \mathcal{T}^{m\times n}$, $\tilde{b}=(\tilde{b}_1, \ldots, \tilde{b}_m) \in \mathcal{T}^m$ and $\tilde{c}=(\tilde{c}_1, \ldots, \tilde{c}_r) \in \mathcal{T}^r$.  The minimax fully fuzzy linear programming (MFFLP) problem is formulated as follows:

\begin{align}
\min & \;\;  \Theta(\{\tilde{c}_1^t\tilde{x}, \ldots, \tilde{c}_r^t\tilde{x}\}) \label{fuzzy:0}\tag{MMFFLP}\\
\mbox{s.t. } & \tilde{A} \tilde{x} \;\; {\preceq\choose =} \;\; \tilde{b},\\
& \tilde{x}_i \in \mathcal{T}_{\geq 0}., \forall i \in N,\\
& \tilde{x}_i\equiv(x_i,x_i,x_i) \in \{0,1\}, \forall i \in Z.\label{fuzzy:f}
\end{align}
where $\Theta(\{\tilde{c}_1^tx, \ldots, \tilde{c}_r^tx\})$ is the minimal upper bound operator defined in \eqref{mub}.

Note that apart from the \textit{fuzzyfication} of the parameters in \eqref{mm0}--\eqref{mmf}, the crisp and the fuzzy minimax problems also differ in the binary relation in the constraints and the objective function. In case of the constraints the $\leq$ relation is substituted by the binary relation $\preceq$ between fuzzy while in the objective function, the maximum operator turns into the minimal upper bound operator of fuzzy numbers.

Based on the relationships introduced in Definition \ref{def partial orders}, we propose the following concept of nondominated solution for the considered problem (MMFFLP).

\begin{definition}\label{Nondominated solution}
	Let $\tilde{x}^{*}$ be a feasible solution for \eqref{fuzzy:0}. We say that $\tilde{x}^{*}$ is \textit{fuzzy optimal solution} of \eqref{fuzzy:0} if there not exist a feasible solution $\tilde{x}$ of \eqref{fuzzy:0} such that $\Theta(\{\tilde{c}_1^t\tilde{x}, \ldots, \tilde{c}_r^t\tilde{x}\}) \preceq \Theta(\{\tilde{c}_1^t\tilde{x}^*, \ldots, \tilde{c}_r^t\tilde{x}^*\})$.
\end{definition}
In order to get a more simple formulation of \eqref{fuzzy:0}, and in analogy to the crisp problem, we include a new fuzzy variable $\tilde{\theta}=(\theta^-,\hat{\theta},\theta^+)$:

\begin{align}
\min & \;\;  \tilde{\theta} \label{fuzzy1:0}\tag{MMFFLP1}\\
\mbox{s.t. } &\tilde{c}_j^t\tilde{x} \lessapprox \tilde{\theta}, \forall j=1, \ldots, r, \label{2:0}\\
& \tilde{A} \tilde{x} \;\; {\preceq\choose =} \;\; \tilde{b},\label{2:1}\\
& \tilde{\theta} \in \mathcal{T}, \label{2:2}\\
& \tilde{x}_i \in \mathcal{T}_{\geq 0}., \forall i \in N,\label{2:3}\\
& \tilde{x}_i\equiv(x_i,x_i,x_i) \in \{0,1\}, \forall i \in Z.\label{fuzzy1:f}
\end{align}

The following result states the relationwhip between the fuzzy optimal solutions of \eqref{fuzzy:0} and \eqref{fuzzy1:0}.
\begin{theorem}\label{Pareto between (MMFFLP) and (MMFFLP)}
Let $\tilde{x}^*$ be a feasible solution of \eqref{fuzzy:0} and $\tilde{\theta}^*=\Theta(\{\tilde{c}_1^t\tilde{x}^*, \ldots, \tilde{c}_r^t\tilde{x}^*\}) $. Then, $\tilde{x}^*$ is a fuzzy optimal solution of \eqref{fuzzy:0} if and only if $(\tilde{x}^*,\tilde{\theta}^*)$ is a fuzzy optimal solution of \eqref{fuzzy1:0}.
\end{theorem}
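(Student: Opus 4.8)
The plan is to prove both implications by contraposition, relying on a single structural observation: for any fixed feasible $\tilde{x}$, the values of $\tilde{\theta}$ admissible in \eqref{fuzzy1:0} are exactly the upper bounds of $S_{\tilde{x}}:=\{\tilde{c}_1^t\tilde{x},\ldots,\tilde{c}_r^t\tilde{x}\}$, since constraints \eqref{2:0} say precisely that $\tilde{\theta}\in\mathcal{U}(S_{\tilde{x}})$; moreover, by Proposition \ref{prop:2}, $\Theta(S_{\tilde{x}})$ is the unique $\lessapprox$-minimal such upper bound, so $\Theta(S_{\tilde{x}})\lessapprox\tilde{\theta}$ for every feasible $\tilde{\theta}$. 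I abbreviate $\Theta(\tilde{x}):=\Theta(S_{\tilde{x}})$. Before either direction, I would record that $(\tilde{x}^*,\tilde{\theta}^*)$ is feasible for \eqref{fuzzy1:0}: its remaining constraints \eqref{2:1}--\eqref{fuzzy1:f} are identical to those of \eqref{fuzzy:0}, which $\tilde{x}^*$ satisfies, and $\tilde{\theta}^*=\Theta(\tilde{x}^*)\in\mathcal{U}(S_{\tilde{x}^*})$ by Proposition \ref{prop:2}(1), so \eqref{2:0} holds as well.

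For the ``only if'' direction I would assume $\tilde{x}^*$ is fuzzy optimal for \eqref{fuzzy:0} and suppose, toward a contradiction, that $(\tilde{x}^*,\tilde{\theta}^*)$ is not fuzzy optimal for \eqref{fuzzy1:0}. Then there is a feasible pair $(\tilde{x},\tilde{\theta})$ with $\tilde{\theta}\preceq\tilde{\theta}^*$. Since $\tilde{\theta}$ is feasible, it is an upper bound of $S_{\tilde{x}}$, whence $\Theta(\tilde{x})\lessapprox\tilde{\theta}$ by the minimality recalled above. Combining $\Theta(\tilde{x})\lessapprox\tilde{\theta}$ with $\tilde{\theta}\preceq\tilde{\theta}^*$ should yield $\Theta(\tilde{x})\preceq\tilde{\theta}^*=\Theta(\tilde{x}^*)$; as $\tilde{x}$ is also feasible for \eqref{fuzzy:0}, this contradicts the optimality of $\tilde{x}^*$ in the sense of Definition \ref{Nondominated solution}.

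The ``if'' direction is the more direct one. Assuming $(\tilde{x}^*,\tilde{\theta}^*)$ optimal for \eqref{fuzzy1:0}, I would suppose $\tilde{x}^*$ is not optimal for \eqref{fuzzy:0}, so that some feasible $\tilde{x}$ satisfies $\Theta(\tilde{x})\preceq\Theta(\tilde{x}^*)=\tilde{\theta}^*$. Then $(\tilde{x},\Theta(\tilde{x}))$ is feasible for \eqref{fuzzy1:0} --- again the shared constraints hold, $\Theta(\tilde{x})\in\mathcal{T}$ by \eqref{mub}, and $\Theta(\tilde{x})\in\mathcal{U}(S_{\tilde{x}})$ --- and its objective value satisfies $\Theta(\tilde{x})\preceq\tilde{\theta}^*$, contradicting the optimality of $(\tilde{x}^*,\tilde{\theta}^*)$.

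The one step needing genuine care --- the main obstacle --- is the ``mixed transitivity'' used in the only-if part, namely deducing $\Theta(\tilde{x})\preceq\tilde{\theta}^*$ from $\Theta(\tilde{x})\lessapprox\tilde{\theta}$ and $\tilde{\theta}\preceq\tilde{\theta}^*$. Transitivity of $\lessapprox$ is immediate from its componentwise definition on $\alpha$-levels, giving $\Theta(\tilde{x})\lessapprox\tilde{\theta}^*$; the delicate point is that the strictness witnessing $\tilde{\theta}\preceq\tilde{\theta}^*$ is preserved. I would settle this through the triplet characterization of Theorem \ref{order characterization}: writing each number as a triplet, $\tilde{\theta}\preceq\tilde{\theta}^*$ forces a strict inequality in at least one of the three coordinates, and the corresponding coordinate inequality $\Theta(\tilde{x})^{\bullet}\le\tilde{\theta}^{\bullet}$ then propagates strictness to $\Theta(\tilde{x})^{\bullet}<(\tilde{\theta}^*)^{\bullet}$, so that $\Theta(\tilde{x})\ne\tilde{\theta}^*$ and hence $\Theta(\tilde{x})\preceq\tilde{\theta}^*$ by Theorem \ref{order characterization}(iii).
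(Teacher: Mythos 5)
Your proof is correct and takes essentially the same route as the paper's: both check feasibility of $(\tilde{x}^*,\tilde{\theta}^*)$, invoke Proposition \ref{prop:2} to conclude that any $\tilde{\theta}$ feasible in \eqref{fuzzy1:0} satisfies $\Theta(\{\tilde{c}_1^t\tilde{x},\ldots,\tilde{c}_r^t\tilde{x}\})\lessapprox\tilde{\theta}$, and settle each direction by producing or excluding a dominating solution exactly as in the paper (the paper's closing paragraph verifying $\Theta(\{\tilde{c}_1^t\tilde{x}^*,\ldots,\tilde{c}_r^t\tilde{x}^*\})=\tilde{\theta}^*$ is redundant under the theorem's hypothesis, so omitting it loses nothing). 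If anything, your explicit coordinate-wise argument, via Theorem \ref{order characterization}, that $\Theta(\tilde{x})\lessapprox\tilde{\theta}\preceq\tilde{\theta}^*$ forces $\Theta(\tilde{x})\preceq\tilde{\theta}^*$ is more careful than the paper, which passes over this strictness-propagation step.
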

\begin{proof}
Let $\tilde{x}^{*}$ a fuzzy optimal solution of \eqref{fuzzy:0} and $\tilde{\theta}=\Theta(\{\tilde{c}_1^t\tilde{x}^*, \ldots, \tilde{c}_r^t\tilde{x}^*\}) $.  Observe that by definition $\tilde{c}_j^t\tilde{x}^*\lessapprox \tilde{\theta}^*$ for all $j=1, \ldots, r$.  Then, \eqref{2:0}--\eqref{fuzzy1:f} are verified, so $(\tilde{x}^*,\tilde{\theta}^*)$ is a feasible solution of \eqref{fuzzy1:0}. To show that $(\tilde{x}^*,\tilde{\theta}^*)$ is a fuzzy optimal solution, let $(\tilde{x},\tilde{\theta})$ a feasible solution for \eqref{fuzzy1:0}. In particular, $\tilde{x}$ is a feasible solution of \eqref{fuzzy:0}. Since $\tilde{x}^{*}$ is a fuzzy optimal solution of \eqref{fuzzy:0}, we have that
	\begin{equation}\label{mub 2}
	\Theta(\{\tilde{c}_1^t\tilde{x}, \ldots, \tilde{c}_r^t\tilde{x}\})\not\preceq\Theta(\{\tilde{c}_1^t\tilde{x}^*, \ldots, \tilde{c}_r^t\tilde{x}^*\})\lessapprox\tilde{\theta}^*.
	\end{equation}  

Since $(\tilde{x},\tilde{\theta})$ verifies \eqref{2:0}, we get that $\tilde{\theta} \in \mathcal{U}(\{\tilde{c}_1^t\tilde{x}, \ldots, \tilde{c}_r^t\tilde{x}\})$. Then, by Proposition \ref{prop:2} we have that 
$\Theta(\{\tilde{c}_1^t\tilde{x}, \ldots, \tilde{c}_r^t\tilde{x}\})\preceq\tilde{\theta}$. 
which combined with (\ref{mub 2}) implies that $\tilde{\theta}\not\preceq\tilde{\theta}^*$. Thus, $(\tilde{x}^*,\tilde{\theta}^*)$ is a fuzzy optimal solution of \eqref{fuzzy1:0}. 

Let us now suppose that $(\tilde{x}^*,\tilde{\theta}^*)$ is a fuzzy optimal solution of \eqref{fuzzy1:0}. Then it is not difficult to see that $\tilde{x}^{*}$ is feasible for \eqref{fuzzy1:0}. Let  $\tilde{x}$ be a feasible solution of \eqref{fuzzy:0} with $\tilde{\theta}=\Theta(\{\tilde{c}_1^t\tilde{x}, \ldots, \tilde{c}_r^t\tilde{x}\}){\preceq}\Theta(\{\tilde{c}_1^t\tilde{x}^*, \ldots, \tilde{c}_r^t\tilde{x}^*\})$.

Clearly, $(\tilde{x},\tilde{\theta})$ is a feasible solution of \eqref{fuzzy1:0}. Since $(\tilde{x},\tilde{\theta})$ verifies \eqref{2:0}, $\Theta(\{\tilde{c}_1^t\tilde{x}, \ldots, \tilde{c}_r^t\tilde{x}\}) \preceq \tilde{\theta}^*$, and then $\tilde{\theta} \lessapprox \tilde{\theta}^*$. Thus, $\tilde{\theta} \preceq \tilde{\theta}^*$ which implies that $(\tilde{x}^*,\tilde{\theta}^*)$ is not a fuzzy optimal solution of \eqref{fuzzy1:0}, which contradicts the hyphotesis.

To finish the proof, let us check that $\Theta(\{\tilde{c}_1^t\tilde{x}^*, \ldots, \tilde{c}_r^t\tilde{x}^*\})=\tilde{\theta}^*$. Since $\Theta(\{\tilde{c}_1^t\tilde{x}^*, \ldots, \tilde{c}_r^t\tilde{x}^*\})\lessapprox\tilde{\theta}$, let us assume that $\Theta(\{\tilde{c}_1^t\tilde{x}^*, \ldots, \tilde{c}_r^t\tilde{x}^*\})\neq\tilde{\theta}$, that is $\Theta(\{\tilde{c}_1^t\tilde{x}^*, \ldots, \tilde{c}_r^t\tilde{x}^*\})\preceq\tilde{\theta}$. Define $\tilde{\theta}^{**}=\Theta(\{\tilde{c}_1^t\tilde{x}^*, \ldots, \tilde{c}_r^t\tilde{x}^*\})$. Then, $(\tilde{x}^*,\tilde{\theta}^{**})$ is a feasible solution of \eqref{fuzzy1:0} which dominates $(\tilde{x}^*,\tilde{\theta}^*)$, contradicting its optimality.
\end{proof}

\section{A multiobjective reformulation for FFLPP}
\label{sec:3}
In the previous section we state the equivalence between two formulations of fully fuzzy minimax mixed integer programming problem, \eqref{fuzzy:0} and \eqref{fuzzy1:0}. The main advantage of \eqref{fuzzy1:0} is that it can be equivalently rewritten as a three-objective crisp mixed integer programming problem using the arithmetic of triangular fuzzy numbers. In what follows, we describe such a crisp formulation and derive some properties about it.

First let us recall some basic definitions on multiobjective optimization. Let us consider the following vector optimization problem:
\begin{align*}
v-\min & f(x)=(f_1(x), \ldots, f_r(x))\\
\mbox{s.t. } & x \in X \subset \R^n.
\end{align*}
where $f_j: X \rightarrow \R$ are certain functions and where $v-\min$ (from \textit{vector minimization}) means that the goal is to find the minimal elements with respect to the componentwise order in $\R^r$. A feasible solution $x^* \in X$ is said to be an efficient solution of the above problem if there not exists $x \in X$ such that $f_j(x^*) \leq f_j(x)$ for $j=1, \ldots, r$ with $f_j(x^*) < f_j(x)$ for some $j\in \{1, \ldots, r\}$.

\begin{theorem}\label{th:mo}
Let $(\tilde{x}\equiv (x^-,\hat x, x^+),\tilde{\theta}\equiv (\theta^-,\hat \theta, \theta^+))$ a fuzzy optimal solution of \eqref{fuzzy1:0}. Then, $\Big((x^-,\hat x, x^+),  (\theta^-,\hat \theta, \theta^+)\Big) \in \R^{n\times 3} \times \R^3$ is an efficient solution of the following three-objective mixed integer programming problem:
\begin{align}
v-\min& \;(\theta^-,\hat \theta, \theta^+)\label{mo}\tag{VMMLP}\\
\mbox{s.t. }& \dsum_{i=1}^n \left(\tilde{c_{ji}}\tilde{x_i}\right)^- \le {\theta}^-, \forall j=1, \ldots, r,\label{mo1}\\
& \dsum_{i=1}^n \widehat{\tilde{c_{ji}}\tilde{x_i}}\le \hat{\theta}, \forall j=1, \ldots, r,\label{mo2}\\
& \dsum_{i=1}^n \left(\tilde{c_{ji}}\tilde{x_i}\right)^+ \le {\theta}^+, \forall j=1, \ldots, r,\label{mo3}\\
& \dsum_{i=1}^n \left(\tilde{a_{ji}}\tilde{x_i}\right)^-  {\preceq\choose =}  \;\;\widehat{b}_j, \forall j=1, \ldots, m,\label{mo4}\\
& \dsum_{i=1}^n \widehat{\tilde{a_{ji}}\tilde{x_i}} {\preceq\choose =}  \;\;b_j^-, \forall j=1, \ldots, m,\label{mo5}\\
& \dsum_{i=1}^n \left(\tilde{a_{ji}}\tilde{x_i}\right)^+  {\preceq\choose =}  \;\;b_j^+, \forall j=1, \ldots, m,\label{mo6}\\
& {\theta}^- -\hat{{\theta}} \leq 0,\label{mo7}\\
& \hat{{\theta}}-{\theta}^+ \leq 0, \label{mo8}
\end{align}
\begin{align}
& \left(\tilde{c_{ji}}\tilde{x_i}\right)^--\widehat{\tilde{c_{ji}}\tilde{x_i}} \leq 0, \forall i=1, \ldots, n, j=1, \ldots, r ,\label{mo9}\\
& \widehat{\tilde{c_{ji}}\tilde{x_i}}-\left(\tilde{c_{ji}}\tilde{x_i}\right)^+ \leq 0, \forall i=1, \ldots, n, j=1, \ldots, r ,\label{mo10}\\
& x^-_i-\hat{x}_i \leq 0,\forall i \in N,\label{mo11}\\
& \hat{x}_i-x^+_i \leq 0,\forall i \in N, \label{mo12}\\
& x^-_i \geq 0,  \forall i \in N,\label{mo13}\\
& x_i^-=\hat x_i=x_i^+ \in\{0,1\}, \forall i \in Z.\label{mo14}
\end{align}
Furthermore, any efficient solution of \eqref{mo} induces a fuzzy optimal solution of \eqref{fuzzy1:0}. 
\end{theorem}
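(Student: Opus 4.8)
The plan is to exhibit the ``component-encoding'' map $\Phi$ that sends a pair $(\tilde{x},\tilde{\theta})$, with $\tilde{x}_i=(x_i^-,\hat{x}_i,x_i^+)$ and $\tilde{\theta}=(\theta^-,\hat{\theta},\theta^+)$, to the real tuple $\big((x^-,\hat{x},x^+),(\theta^-,\hat{\theta},\theta^+)\big)\in\R^{n\times 3}\times\R^3$, and to prove two facts: (a) $\Phi$ restricts to a bijection between the feasible set of \eqref{fuzzy1:0} and the feasible set of \eqref{mo}; and (b) $\Phi$ transports the dominance induced by $\preceq$ on the objective $\tilde{\theta}$ exactly onto componentwise dominance on $(\theta^-,\hat{\theta},\theta^+)$. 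Granting (a) and (b), ``fuzzy optimal for \eqref{fuzzy1:0}'' and ``efficient for \eqref{mo}'' become the same assertion about $\Phi(\tilde{x}^*,\tilde{\theta}^*)$, and both implications in the statement follow simultaneously.

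For (a) I would unpack each fuzzy constraint into its defining triple. Membership $\tilde{\theta}\in\mathcal{T}$ means exactly $\theta^-\le\hat{\theta}\le\theta^+$, i.e.\ \eqref{mo7}--\eqref{mo8}; likewise $\tilde{x}_i\in\mathcal{T}_{\geq 0}$ means $0\le x_i^-\le\hat{x}_i\le x_i^+$, i.e.\ \eqref{mo11}--\eqref{mo13}, and the binary case gives \eqref{mo14}. Since each $\tilde{x}_i$ is nonnegative, Lemma \ref{lemma:1}(iii) shows that every product $\tilde{c}_{ji}\tilde{x}_i$ is again a triangular fuzzy number whose three components are \emph{data-determined linear} expressions in $x_i^-,\hat{x}_i,x_i^+$ (the sign of the fixed datum $\tilde{c}_{ji}$ selecting the applicable case), so \eqref{mo} is a genuine crisp mixed integer \emph{linear} program, and \eqref{mo9}--\eqref{mo10} merely record that these components are ordered. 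By Lemma \ref{lemma:1}(i) the components of $\tilde{c}_j^t\tilde{x}=\sum_i\tilde{c}_{ji}\tilde{x}_i$ are the sums of the corresponding components, so Theorem \ref{order characterization}(ii) turns $\tilde{c}_j^t\tilde{x}\lessapprox\tilde{\theta}$ into precisely the three scalar families \eqref{mo1}--\eqref{mo3}; the identical expansion of $\tilde{A}\tilde{x}\ {\preceq\choose =}\ \tilde{b}$ produces \eqref{mo4}--\eqref{mo6}. Each of these equivalences is reversible, and \eqref{mo7}--\eqref{mo14} guarantee that any feasible tuple of \eqref{mo} reassembles into legitimate triangular fuzzy numbers; hence $\Phi$ is the claimed bijection.

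For (b) I would apply Theorem \ref{order characterization}(iii) to the two objective values: $\tilde{\theta}\preceq\tilde{\theta}^*$ holds if and only if $(\theta^-,\hat{\theta},\theta^+)\le(\theta^{*-},\hat{\theta}^*,\theta^{*+})$ componentwise with at least one strict inequality, which is exactly the statement that $(\theta^-,\hat{\theta},\theta^+)$ dominates $(\theta^{*-},\hat{\theta}^*,\theta^{*+})$ in \eqref{mo}. Therefore no feasible $\tilde{\theta}$ with $\tilde{\theta}\preceq\tilde{\theta}^*$ exists (fuzzy optimality of $(\tilde{x}^*,\tilde{\theta}^*)$) if and only if no feasible tuple dominates $\Phi(\tilde{x}^*,\tilde{\theta}^*)$ (efficiency). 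Combined with (a), this yields the forward implication and, via $\Phi^{-1}$, the ``furthermore'' part.

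I expect the only genuinely delicate point to be the faithfulness of the product encoding in step (a): one must verify, using the nonnegativity of $\tilde{x}_i$ together with the known signs of the coefficient data, that Lemma \ref{lemma:1}(iii) yields well-defined linear component functions, so that the componentwise lower bounds implicit in \eqref{mo1}--\eqref{mo3} are mutually consistent with the triangular ordering \eqref{mo7}--\eqref{mo8} (each triple $\tilde{c}_j^t\tilde{x}$ being itself ordered forces $\max_j(\cdot)^-\le\max_j\widehat{(\cdot)}\le\max_j(\cdot)^+$). This consistency is what ensures that efficient tuples of \eqref{mo} correspond to the minimal upper bound $\tilde{\theta}^*=\Theta(\{\tilde{c}_1^t\tilde{x}^*,\dots,\tilde{c}_r^t\tilde{x}^*\})$ of Theorem \ref{Pareto between (MMFFLP) and (MMFFLP)}, rather than to degenerate triples; everything else is a mechanical translation through Theorem \ref{order characterization} and Lemma \ref{lemma:1}.
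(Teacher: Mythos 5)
Your proposal is correct and takes essentially the same route as the paper's own (very terse) proof: unpack every fuzzy constraint componentwise via the triangular arithmetic of Lemma \ref{lemma:1} and the order characterization of Theorem \ref{order characterization}, and identify $\preceq$-dominance on $\tilde{\theta}$ with componentwise Pareto dominance on $(\theta^-,\hat{\theta},\theta^+)$. Your write-up is actually more explicit than the paper's, which only sketches the constraint-by-constraint correspondence without spelling out the feasible-set bijection and the dominance-transport step that make both implications follow.
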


\begin{proof}
The proof follows by applying the arithmetic of triangular fuzzy numbers and the relations stated in Theorem \ref{order characterization}. Constraints \eqref{mo1}--\eqref{mo3} ensure the correct representation of the minimal upper bound fuzzy number. Constraints \eqref{mo4}--\eqref{mo6} are the original constraints of our fuzzy mixed integer programming problem when applied to the extremes of the triangular fuzzy numbers. With constraints \eqref{mo7} and \eqref{mo8}, \eqref{mo9} and \eqref{mo10}, and \eqref{mo11} and \eqref{mo12}, we assure the adequate representation of the triangular fuzzy numbers $\tilde{\theta}$, $\widetilde{c_j^t x}$ for all $j=1, \ldots, r$, and $\tilde{x}$, respectively.
\end{proof}

Observe that the multiobjective problem above involves the three components of the triangular fuzzy number that result from multiplying two triangular fuzzy number and that by Lemma \ref{lemma:1} its crisp expression depends on the signs of the extremes of the fuzzy numbers in $\tilde{c}$ (recall that $\tilde{x}$ are assumed to be nonnegative fuzzy numbers).

There exist several methods to generate efficient solutions of (VMMLP) (see, for instance, \cite{arana10,arana17}). Most popular methods are based on scalarization. In this regard, Ehrgott \cite{ehrgott2006} discussed the requirements of scalarizations to be used to get efficient solutions in multiobjective integer programs with linear objectives. One apply the weighted sum method, the $\epsilon$-constrained method, Benson's method, the augmented weighted Chebyshev mehtod or the weighted max-ordering method. All of them are either not able to generate all efficient solutions or might be extremely hard. In fact, Ehrgott \cite{ehrgott2006} pointed out that they all are $\mathcal{NP}$-hard, in general. To find all efficient solutions, he proposed the method of elastic constraints as a combination of good features of the weighted sum method and the $\epsilon$-constrained method. To a similar matter, recently, Kirlik and Sayin \cite{kirlik2014} has used two-stage formulations to avoid weakly efficient solutions as opposed to lexicographic optimization \cite{ben1980} employed by Laumanns et al. \cite{laumanns2006}. Kirlik and Sayin \cite{kirlik2014} has proposed a two-stage $\epsilon$-constrained method, in such a way that all efficient solutions can be found for a multiobjective integer program.

\section{Fuzzy Capacitated Center Facility Location Problem}
\label{sec:4}
In this section we apply the above framework to a classical problem in Location Analysis: the capacitated center facility location problem (CCFLP). We are given a set of customers $I=\{1, \ldots, n\}$ and a set of potential facilities $J=\{1, \ldots, m\}$. We are also given a set of allocation costs. For $i\in I$ and $j\in J$, $c_{ij}$ is the cost of allocating $i$ to $j$. Each customer $i$ is assumed to have a demand $d_i$ while each potential facility $j\in J$ is assumed to have a limited capacity $u_j$. A set-up cost is also considered for each facility $j\in J$, $f_j$.The goal is to find the set of potential facilities that must be open by minimizing the maximum allocation cost as well as the overall sum of set-up costs for the open facilities. The CCFLP  is particularly important in emergency situation because a risk-averse decision on the response time to customers is desired. The center (or minimax) facility location problems have been analyzed by different authors, using different techniques and have been applied to different problem (see \cite{minimaxloc1}). However, facility location problems has only been partially studied under a fuzzy environment (see \cite{fuzzyLoc}).

We use the decision variables $y_j = \left\{\begin{array}{cl}
1 & \mbox{if facility $j$ is open,}\\
0 & \mbox{otherwise}\end{array}\right.$ and $x_{ij}$ as the amount of demand of customer $i$ served from facility $j$, for $i\in I$ and $j\in J$. The problem is usually formulated as follows:
\begin{align}
\min & \max_{i=1, \ldots, n} \left\{\sum_{j \in J} c_{ij}x_{ij} \right\} + \dsum_{j\in J} f_jy_j\label{pc:0}\\
\mbox{s.t. } & \sum_{j\in J} x_{ij}=d_i, \forall i \in I,\label{pc:1}\\
& \sum_{i \in I}  x_{ij} \leq u_jy_j, \forall j \in J,\label{pc:2}\\
& x_{ij} \geq 0, \forall i \in I, j \in J,\nonumber\\
& y_j \in \{0,1\}, \forall j \in J.\nonumber
\end{align}
Observe that \eqref{pc:0} is the minimax objective function which tries to find the minimum of ther maximum allocation costs between customers and open facilities. Constraints \eqref{pc:1} assure that the whole demand of each customer is filled. Constraints \eqref{pc:2} ensures that in case the $j$-th facility of open, the capacity of such a facility is not overloaded.

Let us now consider the fuzzy version of the above problem. Let $\tilde{c} \in \mathcal{T}^{n\times m}_{\geq 0}$, $\tilde{d} \in \mathcal{T}^n_{\geq 0}$, $\tilde{u} \in \mathcal{T}^m_{\geq 0}$ and $\tilde{f} \in \mathcal{T}^m_{\geq 0}$ the matrix of allocation cost triangular fuzzy numbers, the fuzzy demands, the fuzzy capacities and the fuzzy set-up costs. Let us also consider that the fraction of demand of each customer served by each facility is a fuzzy number. To be realistic, constraint \eqref{pc:1} is consider as a crisp constraints assuring that the overall demand of each customer is satisfied. With this notation, the fully fuzzy version of the capacitated center location problem is:
\begin{align*}
\min & \;\;\Theta\left(\sum_{j \in J} \tilde{c}_{1j} \tilde{x}_{1j}, \ldots, \sum_{j \in J} \tilde{c}_{nj} \tilde{x}_{nj}  \right) + \dsum_{j\in J} \tilde{f}_jy_j\\
\mbox{s.t. } & \sum_{j\in J} \tilde{x}_{ij}=\tilde{d}_i, \forall i \in I,\\
& \sum_{i \in I}  \tilde{x}_{ij} \lessapprox \tilde{u}_jy_j, \forall j \in J,\\
& \tilde{x}_{ij} \gtrapprox 0, \forall i \in I, j \in J,\\
& y_j \in \{0,1\}, \forall j \in J.
\end{align*}
Note that the above problem although not written in the form of the fuzzy minimax problem \eqref{mm0}--\eqref{mmf} because its objective function. However, it can be adequately rewritten in the standard form since:
$$
\Theta\left(\sum_{j \in J} \tilde{c}_{1j} \tilde{x}_{1j}, \ldots, \sum_{j \in J} \tilde{c}_{nj} \tilde{x}_{nj}  \right) + \dsum_{j\in J} \tilde{f}_jy_j = \Theta\left(\sum_{j \in J} \tilde{c}_{1j} \tilde{x}_{1j}+ \dsum_{j\in J} \tilde{f}_jy_j, \ldots, \sum_{j \in J} \tilde{c}_{nj} \tilde{x}_{nj}+ \dsum_{j\in J} \tilde{f}_jy_j  \right)
$$

By Theorem \ref{th:mo}, the above problem can be equivalently rewritten as the following  three-objective mixed integer programming problem:
\begin{align*}
v-\min & \;\; (\theta^-, \hat \theta , \theta^+)\\
\mbox{s.t. } & \sum_{j \in J} {c}_{ij}^- {x}_{ij}^- + \dsum_{j\in J} f_j^-y_j\leq \theta^-, \forall i\in I,\\
& \sum_{j \in J} \hat{c}_{ij}\hat{x}_{ij} + \dsum_{j\in J}\hat f_j y_j\leq \hat{\theta}, \forall i\in I,\\
& \sum_{j \in J} {c}_{ij}^+ {x}_{ij}^+ +\dsum_{j\in J} f_j^+y_j \leq \theta^+, \forall i\in I,\\
&\sum_{j\in J} {x}_{ij}^-=d_i^-, \forall i \in I,\\
&\sum_{j\in J} \hat{x}_{ij}=\hat d_i, \forall i \in I,\\
&\sum_{j\in J} {x}_{ij}^+=d_i^+, \forall i \in I,\\
& \sum_{i \in I} {d}_i^- {x}_{ij}^- \leq {u}_j^- y_j, \forall j \in J,\\
& \sum_{i \in I} \hat{d}_i\hat {x}_{ij} \leq \hat{u}_j y_j, \forall j \in J,\\
& \sum_{i \in I} {d}_i^+ {x}_{ij}^+ \leq {u}_j^+ y_j, \forall j \in J,
\end{align*}
\begin{align*}
& \theta^- \leq \hat{\theta} \leq \theta^+,\\
& c_{ij}^- x_{ij}^- - \hat{c}_{ij}\hat{x}_{ij} \leq 0, \forall i \in I, j\in J,\\
& \hat{c}_{ij}\hat{x}_{ij} - c_{ij}^+ x_{ij}^+ \leq 0, \forall i \in I, j\in J,\\
& x_{ij}^- - \hat{x}_i \leq 0, \forall i \in I,  j \in J,\\
& \hat{x}_{ij} - x_i^+ \leq 0, \forall i \in I, j \in J,\\
& {x}_{ij}^- \geq 0, \forall i \in I, j \in J,\\
& y_j \in \{0,1\}, \forall j \in J.
\end{align*}

Observe that in this case all the triangular fuzzy numbers are assumed to be nonnegative, and then the multiplication rule is easily derived.

\begin{example}\label{ex:f}
Let us consider the randomly generated fuzzy triangular fuzzy numbers given in Table \ref{param} for the demands, capacities and set-up costs of the $6$ customers (which also act as potential facilities) drawn in Figure \ref{ex:fig}. The transportation costs are shown in Table \ref{costs}.

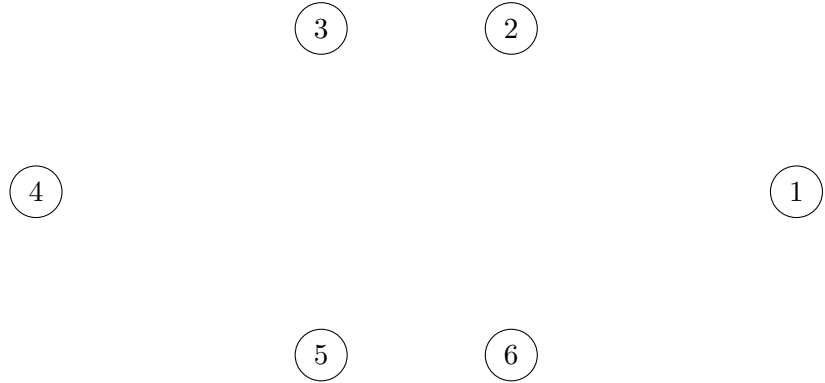
\begin{figure}[h]
\begin{center}
\begin{tikzpicture}[scale=0.05]

\coordinate(X1) at (100,   0);
\coordinate(X2) at (25,   43.3012702);
\coordinate(X3) at (-25,   43.3012702);
\coordinate(X4) at (-100,   0);
\coordinate(X5) at ( -25,  -43.3012702);
\coordinate(X6) at (25,  -43.3012702);

\node[circle,draw](x1) at (X1) {$1$};
\node[circle,draw](x2) at  (X2){$2$};
\node[circle,draw](x3) at  (X3){$3$};
\node[circle,draw](x4) at  (X4){$4$};
\node[circle,draw](x5) at  (X5){$5$};
\node[circle,draw](x6) at  (X6){$6$};
\end{tikzpicture}
\end{center}
\caption{Demand and Potential facility points of Example \ref{ex:f}.\label{ex:fig}}
\end{figure}

\begin{table}[h]
\begin{center}
\begin{tabular}{cc}\\\hline
Parameters & Values\\\hline
$d^-$ & $[1.11,0.28,42.15,18.63,3.06,12.30]$\\
       $\hat d\;\;$ & $[23.00,33.00,46.00,20.00,20.00,37.00]$\\
       $d^+$ & $[23.53,37.32,50.90,28.63,22.15,42.10]$\\\hline
$u^-$ & $[8.78,5.72,53.69,90.14,5.75,44.22]$\\
$\hat u\;\;$ & $[66.00,41.50,74.25,113.25,34.25,90.50]$\\
$u^+$ & $[69.66,45.43,98.63,146.76,49.13,122.92]$\\\hline
$f^-$ & $[358.73,271.28,288.22,448.31,491.00,0.44]$\\
$\hat f^-\;\;$ & $[433.00,524.00,561.00,691.00,520.00,487.00]$\\
$f^+$ & $[ 437.51,1019.19,1003.98,893.36,769.90,535.55]$\\\hline
       \end{tabular}
       \caption{Fuzzy demands, capacities and set-up costs of Example \ref{ex:f}.\label{param}}
              \end{center}
       \end{table}
\renewcommand{\tabcolsep}{0.05cm}

\begin{table}[h]
\begin{center}
{\scriptsize
\begin{tabular}{|cccccc|cccccc|cccccc|}\hline
\multicolumn{6}{|c}{$c^-$} & \multicolumn{6}{|c}{$\hat c$} & \multicolumn{6}{|c|}{$c^+$} \\\hline
0.00 &27.41 &41.62 &73.96 &42.16 &34.28 &0.00 &43.30 &66.14 &100.00 &66.14 &43.30 &0.00 &44.67 &77.96 &103.33 &92.50 &44.58 \\
39.58 &0.00 &20.23 &40.64 &34.67 &39.79 &43.30 &0.00 &25.00 &66.14 &50.00 &43.30 &49.30 &0.00 &27.50 &81.65 &50.25 &53.08 \\
62.95 &21.40 &0.00 &32.50 &29.51 &47.40 &66.14 &25.00 &0.00 &43.30 &43.30 &50.00 &79.96 &34.74 &0.00 &44.81 &55.32 &55.99 \\
97.10 &52.82 &39.56 &0.00 &27.13 &41.70 &100.00 &66.14 &43.30 &0.00 &43.30 &66.14 &107.10 &79.37 &45.14 &0.00 &56.90 &72.44 \\
59.61 &40.01 &36.93 &26.23 &0.00 &21.90 &66.14 &50.00 &43.30 &43.30 &0.00 &25.00 &82.13 &53.13 &53.93 &56.68 &0.00 &33.60 \\
35.92 &39.17 &43.94 &49.19 &21.47 &0.00 &43.30 &43.30 &50.00 &66.14 &25.00 &0.00 &58.58 &48.03 &53.65 &70.17 &33.47 &0.00 \\\hline
\end{tabular}}
       \caption{Allocation costs of Example \ref{ex:f}.\label{costs}}
              \end{center}
       \end{table}

The crisp solution of the minmax capacitated location problem using the center values of the demands ($\hat d$), capacities ($\hat u$), set-up costs ($\hat f$) and allocation costs ($\hat c$), we get the solution drawn in Figure \ref{ex:crisp} where the black filled nodes indicate open facilities, lines connecting customers and facilities when a nonzero demand is served to the customer by the facility and the number over the line is such a demand.

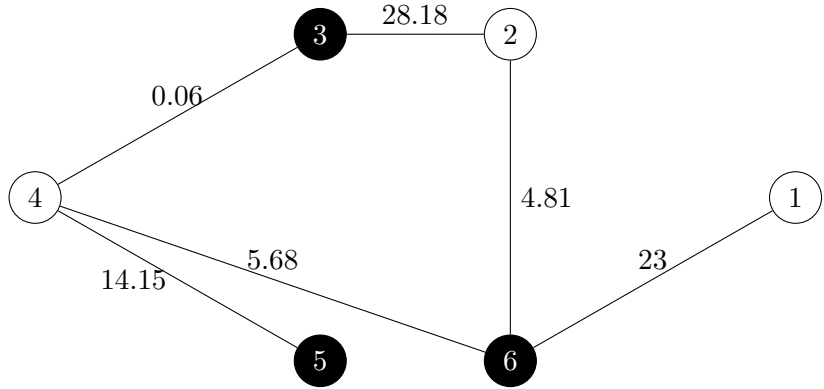
\begin{figure}[H]
\begin{center}
\begin{tikzpicture}[scale=0.05]

\coordinate(X1) at (100,   0);
\coordinate(X2) at (25,   43.3012702);
\coordinate(X3) at (-25,   43.3012702);
\coordinate(X4) at (-100,   0);
\coordinate(X5) at ( -25,  -43.3012702);
\coordinate(X6) at (25,  -43.3012702);

\node[circle,draw](x1) at (X1) {$1$};
\node[circle,draw](x2) at  (X2){{$2$}};
\node[circle,fill,draw](x3) at  (X3){\color{white}$3$};
\node[circle,draw](x4) at  (X4){$4$};
\node[circle,fill,draw](x5) at  (X5){\color{white}$5$};
\node[circle,fill,draw](x6) at  (X6){\color{white}{$6$}};

\draw (x1)-- node [above]{$23$} (x6);
\draw (x2)-- node [above]{$28.18$} (x3);
\draw (x2)-- node [right]{$4.81$} (x6);
\draw (x4)-- node [above]{$0.06$} (x3);
\draw (x4)-- node [left]{$14.15$} (x5);
\draw (x4)-- node [above]{$5.68$} (x6);
\end{tikzpicture}
\end{center}
\caption{Crisp solution for Example \ref{ex:f}.\label{ex:crisp}}
\end{figure}

Using Gurobi, we generate some fuzzy solutions for the fuzzy version of the problem, by using hierarchical methods (see \cite{gurobi}). Some of them are drawn in Figure \ref{fig:fuzzy}.

\begin{center}
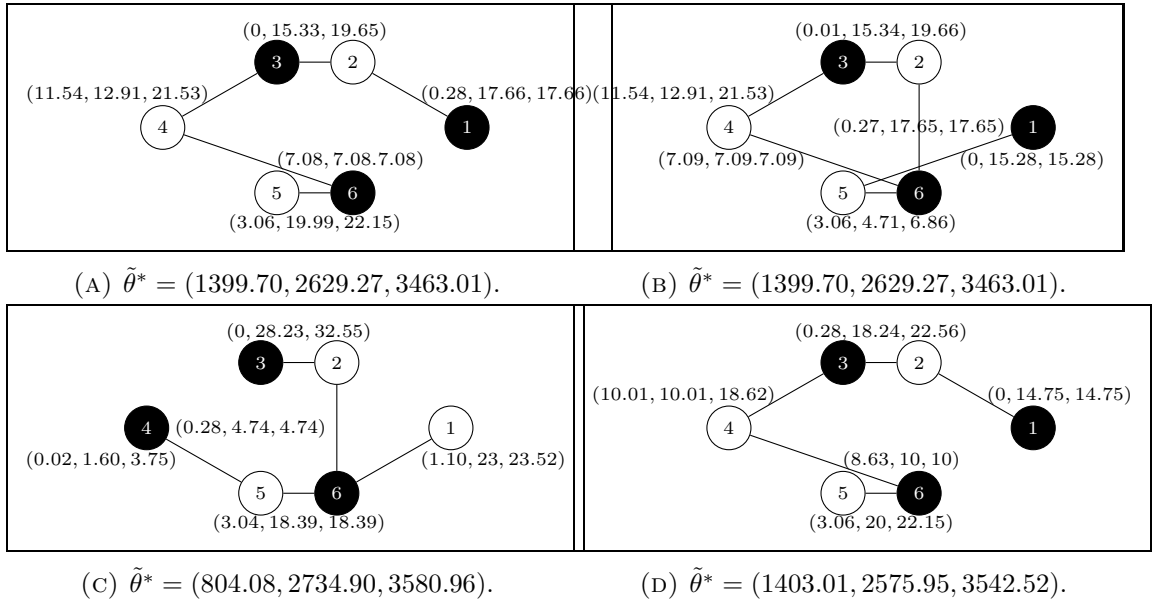
\begin{figure}[H]
\begin{subfigure}[b]{0.5\textwidth}
\fbox{\begin{tikzpicture}[scale=0.02]

\coordinate(X1) at (100,   0);
\coordinate(X2) at (25,   43.3012702);
\coordinate(X3) at (-25,   43.3012702);
\coordinate(X4) at (-100,   0);
\coordinate(X5) at ( -25,  -43.3012702);
\coordinate(X6) at (25,  -43.3012702);

\node[circle,fill,draw](x1) at (X1) {\tiny \color{white}$1$};
\node[circle,draw](x2) at  (X2){\tiny{$2$}};
\node[circle,fill,draw](x3) at  (X3){\tiny\color{white}$3$};
\node[circle,draw](x4) at  (X4){\tiny{$4$}};
\node[circle,draw](x5) at  (X5){\tiny$5$};
\node[circle,fill,draw](x6) at  (X6){\tiny\color{white}{$6$}};

\draw (x2)-- node [right]{\tiny $(0.28,17.66,17.66)$} (x1);
\draw (x2)-- node [above=4pt]{\tiny $(0,15.33,19.65)$} (x3);
\draw (x4)-- node [left]{\tiny $(11.54,12.91,21.53)$} (x3);
\draw (x4)-- node [right=3pt]{\tiny $(7.08,7.08.7.08)$} (x6);
\draw (x5)-- node [below=4pt]{\tiny $(3.06,19.99,22.15)$} (x6);
\end{tikzpicture}}
\caption{$\tilde{\theta}^*=(1399.70, 2629.27, 3463.01)$.\label{a}}
\end{subfigure}~\begin{subfigure}[b]{0.5\textwidth}
\fbox{\begin{tikzpicture}[scale=0.02]

\coordinate(X1) at (100,   0);
\coordinate(X2) at (25,   43.3012702);
\coordinate(X3) at (-25,   43.3012702);
\coordinate(X4) at (-100,   0);
\coordinate(X5) at ( -25,  -43.3012702);
\coordinate(X6) at (25,  -43.3012702);

\node[circle,fill,draw](x1) at (X1) {\tiny \color{white}$1$};
\node[circle,draw](x2) at  (X2){\tiny{$2$}};
\node[circle,fill,draw](x3) at  (X3){\tiny\color{white}$3$};
\node[circle,draw](x4) at  (X4){\tiny{$4$}};
\node[circle,draw](x5) at  (X5){\tiny$5$};
\node[circle,fill,draw](x6) at  (X6){\tiny\color{white}{$6$}};

\draw (x2)-- node [above=4pt]{\tiny $(0.01,15.34,19.66)$} (x3);
\draw (x2)-- node  {\tiny $(0.27,17.65,17.65)$} (x6);
\draw (x4)-- node [left]{\tiny $(11.54,12.91,21.53)$} (x3);
\draw (x4)-- node [left=3pt]{\tiny $(7.09,7.09.7.09)$} (x6);
\draw (x5)-- node [below=4pt]{\tiny $(3.06,4.71,6.86)$} (x6);
\draw (x5)-- node [right=4pt]{\tiny $(0,15.28,15.28)$} (x1);
\end{tikzpicture}}
\caption{$\tilde{\theta}^*=(1399.70, 2629.27, 3463.01)$.\label{b}}
\end{subfigure}

\begin{subfigure}[b]{0.5\textwidth}
\fbox{\begin{tikzpicture}[scale=0.02]

\coordinate(X1) at (100,   0);
\coordinate(X2) at (25,   43.3012702);
\coordinate(X3) at (-25,   43.3012702);
\coordinate(X4) at (-100,   0);
\coordinate(X5) at ( -25,  -43.3012702);
\coordinate(X6) at (25,  -43.3012702);

\node[circle,fill,draw](x4) at (X4) {\tiny \color{white}$4$};
\node[circle,draw](x2) at  (X2){\tiny{$2$}};
\node[circle,fill,draw](x3) at  (X3){\tiny\color{white}$3$};
\node[circle,draw](x1) at  (X1){\tiny{$1$}};
\node[circle,draw](x5) at  (X5){\tiny$5$};
\node[circle,fill,draw](x6) at  (X6){\tiny\color{white}{$6$}};

\draw (x2)-- node [above=4pt]{\tiny $(0,28.23,32.55)$} (x3);
\draw (x2)-- node [left]{\tiny $(0.28,4.74,4.74)$} (x6);
\draw (x5)-- node [left=6pt]{\tiny $(0.02,1.60,3.75)$} (x4);
\draw (x5)-- node [below=4pt]{\tiny $(3.04,18.39,18.39)$} (x6);
\draw (x1)-- node [right=6pt]{\tiny $(1.10,23,23.52)$} (x6);
\end{tikzpicture}}
\caption{$\tilde{\theta}^*=(804.08, 2734.90, 3580.96)$.\label{c}}
\end{subfigure}~\begin{subfigure}[b]{0.5\textwidth}
\fbox{\begin{tikzpicture}[scale=0.02]

\coordinate(X1) at (100,   0);
\coordinate(X2) at (25,   43.3012702);
\coordinate(X3) at (-25,   43.3012702);
\coordinate(X4) at (-100,   0);
\coordinate(X5) at ( -25,  -43.3012702);
\coordinate(X6) at (25,  -43.3012702);

\node[circle,fill,draw](x1) at (X1) {\tiny \color{white}$1$};
\node[circle,draw](x2) at  (X2){\tiny{$2$}};
\node[circle,fill,draw](x3) at  (X3){\tiny\color{white}$3$};
\node[circle,draw](x4) at  (X4){\tiny{$4$}};
\node[circle,draw](x5) at  (X5){\tiny$5$};
\node[circle,fill,draw](x6) at  (X6){\tiny\color{white}{$6$}};

\draw (x2)-- node [right]{\tiny $(0,14.75,14.75)$} (x1);
\draw (x2)-- node [above=4pt]{\tiny $(0.28,18.24,22.56)$} (x3);
\draw (x4)-- node [left]{\tiny $(10.01,10.01,18.62)$} (x3);
\draw (x4)-- node [right=3pt]{\tiny $(8.63,10,10)$} (x6);
\draw (x5)-- node [below=4pt]{\tiny $(3.06,20,22.15)$} (x6);
\end{tikzpicture}}
\caption{$\tilde{\theta}^*=(1403.01, 2575.95, 3542.52)$.\label{d}}
\end{subfigure}
\caption{Some fuzzy solutions of Example \ref{ex:f}.\label{fig:fuzzy}}
\end{figure}
\end{center}

The first observation after solving the crisp and the fuzzy versions of the problems is that  the shapes of the obtained networks when some kind of imprecision is assumed may be different from the crisp solutions.  Actually, the amounts of demands served for each customer by each open facility in the fuzzy case are, in most of the cases,  not crisp amounts but pure triangular fuzzy numbers. Also, observe that the solutions drawn in figures \ref{fig:fuzzy}.\eqref{sub@a} and \ref{fig:fuzzy}.\eqref{sub@b}  have the same values for the $\tilde{\theta}^*$ because of the minimax objectives which only account for the maximal cost, which serves as an upper bound for the rest of the costs.
\end{example}

\section{Conclusions}
\label{sec:5}

In this paper we analyze a fully fuzzy version of minimax mixed integer linear programming problems. We extend the minimax objective my means of defining the maximal upper bound of a set of fuzzy numbers. With such a notion, we formulate minimax fuzzy mixed integer programming problems in which all the parameters and continuous variables are triangular fuzzy numbers. We provide two equivalent fuzzy formulations for the problem and state that solving them is also equivalent of solving a three-objective mixed integer programming problem. Finally, we apply the proposed approach to a well-known problem in Locational Analysis: the  capacitated center facility location problem.

\section*{Acknowledgments}
The second author was partially supported by the research projects MTM2016-74983-C2-1-R (MINECO, Spain) and PP2016-PIP06 (Universidad
de Granada) and the research group SEJ-534 (Junta de Andaluc\'ia). 


\end{document}